\documentclass[12pt]{amsart}
\usepackage[margin=1in]{geometry}
\usepackage{amscd, amssymb, amsmath, wasysym}
\usepackage{graphicx}
\usepackage{amsfonts}
\usepackage{mathrsfs}    
\usepackage{eucal}     
\usepackage{latexsym}   
\usepackage{verbatim}   
\usepackage[all]{xy}   
\usepackage[dvipsnames]{xcolor}
\usepackage{bookmark}
\usepackage{subcaption}\usepackage{BOONDOX-calo}

 \usepackage{hyperref}
 \hypersetup{
     colorlinks=true,
     linkcolor=NavyBlue,
     filecolor=NavyBlue,
     citecolor = TealBlue,
     urlcolor=magenta,
  }

\theoremstyle{definition}

\theoremstyle{remark}

\theoremstyle{corollary}

\newtheorem*{conjectureBI}{Barth--Ionescu Conjecture}
\newtheorem*{conjectureCI}{Hartshorne Conjecture}

\theoremstyle{theorem}

\theoremstyle{corollary}

\newtheorem{theorem}{Theorem}[section]
\newtheorem{lemma}[theorem]{Lemma}
\newtheorem{proposition}[theorem]{Proposition}

\theoremstyle{corollary}
\newtheorem{corollary}[theorem]{Corollary}

\theoremstyle{definition}

\theoremstyle{remark}
\newtheorem{remark}[theorem]{Remark}

\numberwithin{equation}{section}

\newcommand{\sO}{\mathscr{O}}
\newcommand{\sI}{\mathscr{I}}
\def\nP{\mathbf{P}}

\def\Pic{\operatorname{Pic}}

\def\reg{\operatorname{reg}}


\newcommand{\suchthat}{\;\ifnum\currentgrouptype=16 \middle\fi|\;}



\input xy
\xyoption{all}

\title[Smooth projective varieties of small degree and codimension]{Some remarks on smooth projective varieties \\of small degree and codimension}

\begin{document}

\author{Jinhyung Park}
\address{Department of Mathematical Sciences, KAIST, 291 Daehak-ro, Yuseong-gu, Daejeon 34141, Republic of Korea}
\email{parkjh13@kaist.ac.kr}

\date{\today}
\subjclass[2020]{14N25, 14E25}
\keywords{small degree, small codimension, Hartshorne conjecture, projection, Fano variety}

\thanks{The author was partially supported by the National Research Foundation (NRF) funded by the Korea government (MSIT) (NRF-2022M3C1C8094326).}


\begin{abstract}
The purpose of this note is twofold. First, we give a quick proof of Ballico--Chiantini's theorem stating that a Fano or Calabi--Yau variety of dimension at least $4$ in codimension two is a complete intersection. Second, we improve Barth--Van de Ven's result asserting that if the degree of a smooth projective variety of dimension $n$ is less than approximately $0.63 \cdot n^{1/2}$, then it is a complete intersection. We show that the degree bound can be improved to approximately $0.79 \cdot n^{2/3}$.
\end{abstract}

\maketitle


\section{Introduction}

\noindent Throughout the note, $X \subseteq \nP^r$ is a non-degenerate smooth projective complex variety of dimension $n$, codimension $e=r-n$, and degree $d$, and $H$ is its hyperplane section. In 1970's, Barth and Hartshorne independently observed that if $e$ is small compared with $n$, then $X$ is homologically like a complete intersection. The fact that no projective manifolds in small codimension were known other than complete intersections led Hartshorne to make the following famous conjecture.

\begin{conjectureCI}[{\cite{Hartshorne}}]
If $n \geq 2e+1$ or $n=4$ and $e=2$, then $X \subseteq \nP^r$ is a complete intersection.
\end{conjectureCI}

This conjecture is widely open even when $e=2$ or $X$ is a Fano variety. However, based on Ran's deep result \cite{Ran}, Ballico--Chiantini \cite{BC} verified Hartshorne conjecture when $e=2$ and $X$ is a Fano variety (or a Calabi--Yau variety).

\begin{theorem}[{\cite[Theorem 6]{BC}}]\label{thm:BC}
If $X$ is a Fano or Calabi--Yau variety, then $X \subseteq \nP^{n+2}$ is a complete intersection. In particular, if $d \leq 2n+2$, then $X \subseteq \nP^{n+2}$ is a complete intersection.
\end{theorem}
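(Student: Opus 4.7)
The plan is to combine the Barth--Larsen theorem (to put us in the subcanonical situation), the adjunction formula together with Castelnuovo's classical genus bound for curves in $\nP^3$, and Ran's deep theorem from \cite{Ran} on smooth codimension-two subvarieties. The main assertion and its ``in particular'' consequence both rest on the same sectional-curve computation, with Ran's theorem doing the heavy lifting for the former.

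First, since $n \geq 4$, the Barth--Larsen theorem yields $\Pic(X) = \Z \cdot H$, so one may write $\omega_X = \sO_X(k)$ for a unique integer $k$; the Fano and Calabi--Yau hypotheses amount to $k \leq -1$ and $k = 0$ respectively, so in either case $k \leq 0$. Slicing $X$ by $n-1$ general hyperplanes produces a smooth non-degenerate curve $C = X \cap \nP^3$ of degree $d$, and adjunction gives the sectional genus formula
\[
2g(C) - 2 = (k + n - 1)\, d.
\]
With $k \leq 0$ in hand, I would invoke Ran's theorem from \cite{Ran}: its numerical hypothesis translates, via the formula above, into an upper bound on the sectional genus that the Fano/Calabi--Yau condition comfortably supplies, and the conclusion is that $X$ is a complete intersection. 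The step I expect to be most delicate is verifying that Ran's hypothesis is met uniformly across the Fano range; for Fano $X$ of large index $|k|$ the sectional genus is small and the bound is automatic, whereas the Calabi--Yau endpoint $k=0$ sits on the boundary of Ran's admissible range and will require extra care (and presumably uses the codimension-two hypothesis in an essential way).

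For the ``in particular'' clause, I would pair the sectional genus formula with Castelnuovo's bound $g(C) \leq \lfloor (d-2)^2/4 \rfloor$ for smooth non-degenerate curves in $\nP^3$; a short rearrangement yields
\[
k \;\leq\; \frac{d - 2n - 2}{2}.
\]
Thus $d \leq 2n+2$ forces $k \leq 0$, placing $X$ in the Fano or Calabi--Yau regime, and the main part completes the argument.
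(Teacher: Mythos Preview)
Your proposal reconstructs the \emph{original} Ballico--Chiantini argument, which is precisely what the paper sets out to avoid. The paper's contribution here is a new proof that bypasses Ran's theorem entirely: it first shows (Lemma~\ref{lem:h^0(I_X(n))}, via an Euler-characteristic computation on the rank-two Serre bundle associated to $X$) that $X$ lies on a hypersurface of degree $n$; this bounds the fibre length of a generic point projection $\pi\colon X\to\overline{X}\subseteq\nP^{n+1}$ by $n$, so one obtains a surjection from $\bigoplus_{i=0}^{n-1}\sO_{\nP^{n+1}}(-i)$ onto $\pi_*\sO_X$ whose kernel $E$ is a rank-$n$ bundle with vanishing intermediate cohomology. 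The Evans--Griffith syzygy theorem then forces $E$ to split, whence $X$ is projectively normal and (again by Evans--Griffith) a complete intersection (Lemma~\ref{lem:projmethod}). Your route instead treats \cite{Ran} as a black box and would reproduce \cite[Theorem~6]{BC} in its original form; what the paper's approach buys is a self-contained argument resting only on Kodaira vanishing, the Lazarsfeld projection technique, and Evans--Griffith.

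Your ``in particular'' argument via Castelnuovo's genus bound for the curve section in $\nP^3$ is correct and is essentially the content of \cite[Theorem~1.2]{KP}, which is what the paper invokes at that step. One caveat on the main part: you yourself flag that the Calabi--Yau case $k=0$ ``sits on the boundary of Ran's admissible range and will require extra care,'' yet you give no indication of how to close it; as written this is a genuine gap in your outline, even if the original \cite{BC} argument does ultimately handle it. The paper's projection/Evans--Griffith method treats the Fano and Calabi--Yau cases uniformly, with no such boundary issue.
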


In this note, we give a quick proof of the theorem. We do not use Ran's result \cite{Ran}. Instead, we exploit the projection method developed by Lazarsfeld \cite{Lazarsfeld} for the Castelnuovo--Mumford regularity (see also \cite{Kwak}). Consider a general projection $\pi \colon X \rightarrow \overline{X} \subseteq \nP^{n+1}$. We use Ballico--Chiantini's argument (this is the only part where we use their argument) to prove that $X$ is contained in a hypersurface of degree $n$ (Lemma \ref{lem:h^0(I_X(n))}). This yields a surjective map
$$
\sO_{\nP^{n+1}}(-n+1) \oplus \cdots \oplus \sO_{\nP^{n+1}}(-1) \oplus \sO_{\nP^{n+1}} \longrightarrow \pi_{*} \sO_X
$$
whose kernel $E$ is a vector bundle of rank $n$ such that $H^i(\nP^{n+1}, E^{\vee}(m))=0$ for $1 \leq i \leq n-1$ and $m \in \mathbf{Z}$. By Evans--Griffith theorem \cite{EG}, $E$ is splitting. This implies that $X \subseteq \nP^{n+2}$ is projectively normal, and it is well-known that it suffices to conclude the theorem.

\medskip

Now, recall a famous theorem of Hartshorne \cite{Hartshorne} that there exists a function $\mathcal{N}(n)$ such that if $d \leq \mathcal{N}(n)$, then $X \subseteq \nP^r$ is a complete intersection. Subsequently, Barth--Van de Ven \cite{Barth} proved that one can take
$$
\mathcal{N}(n) = \sqrt{\frac{2}{5}n + \frac{1}{4}} + \frac{1}{2}.
$$
Note that the Segre embedding of $\nP^{n-1} \times \nP^1$ into $\nP^{2n-1}$ is not a complete intersection but has dimension $n$ and degree $n$. Moreover, the Pl\"{u}cker embedding of the Grassmannian $\mathbf{G}(1,4)$ into $\nP^9$ is not a complete intersection but has dimension $6$ and degree $5$. Considering these examples, the following conjecture seems the best possible.

\begin{conjectureBI}[{cf. \cite[Conjecture 3.5]{IR2}}]
If $d \leq n-1$, then $X \subseteq \nP^r$ is a complete intersection unless it is the Pl\"{u}cker embedding $\mathbf{G}(1,4) \subseteq \nP^9$.
\end{conjectureBI}

This conjecture suggests that one can take $\mathcal{N}(n)=n-2$.  In this note, we show that one can take
$$
\mathcal{N}(n)=\min \left\{n-2, \sqrt[3]{\frac{(n+2)^2}{2}}+2 \right\}.
$$
When $n \gg 0$, aforementioned Barth--Van de Ven's bound is approximately $0.63 \cdot n^{1/2}$, but our bound is approximately $0.79 \cdot n^{2/3}$. It is elementary to see that if $d \leq n-1$, then $n \geq 2e+1$ unless $X \subseteq \nP^r$ is the Pl\"{u}cker embedding $\mathbf{G}(1,4) \subseteq \nP^9$ (Corollary \ref{cor:d<=n-2}). Thus Barth--Ionescu conjecture is a special case of Hartshorne conjecture. Moreover, our assertion on $\mathcal{N}(n)$ raised above is a consequence of the following theorem.

\begin{theorem}\label{thm:main}
Assume that $n \geq 2e+1$. If 
$$
d \leq \sqrt[3]{\frac{(n+e^2-2e+2)^2}{2}}+2,
$$
then $X \subseteq \nP^r$ is a complete intersection.
\end{theorem}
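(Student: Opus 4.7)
The plan is to generalize the projection-and-splitting strategy of the proof of Theorem \ref{thm:BC} to higher codimension. I first project $X \subseteq \nP^r$ from a general $(e-2)$-plane to obtain a finite birational morphism $\pi \colon X \to \overline{X} \subseteq \nP^{n+1}$ onto a hypersurface $\overline{X}$ of degree $d$. The target is the same as in the codimension-two case: realize $\pi_\ast \sO_X$ as the cokernel of an inclusion of a direct sum of line bundles on $\nP^{n+1}$, all of whose twists are bounded by $n$, so that the kernel is an arithmetically Cohen--Macaulay vector bundle to which Evans--Griffith applies.

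The decisive new step is to exploit the small-degree hypothesis. Since $d \le \sqrt[3]{(n+e^2-2e+2)^2/2}+2$, the variety $X$ should lie on many hypersurfaces of low degree. Concretely, I would estimate $h^0(\nP^r, \sI_X(t))$ from below by comparing $h^0(\nP^r, \sO(t))$ with $h^0(X, \sO_X(t))$, the latter controlled by the Hilbert polynomial $\binom{n+t}{t} + O(d)$ together with the Castelnuovo--Mumford regularity bounds coming from the projection method of Lazarsfeld--Kwak. The cube-root shape of the bound suggests iterating a three-step estimate: a first family of low-degree hypersurfaces cuts $\overline{X}$ down to a proper subscheme, a second family controls the generating degrees of $\pi_\ast \sO_X$ as an $\sO_{\nP^{n+1}}$-module, and a third produces the monic relation needed to force the twists $a_j \le n-1$. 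The correction $e^2 - 2e + 2$ arises naturally from the codimension contribution to these counts, and the exponent $2/3$ from the threefold bootstrap. This quantitative step, and in particular its delicate interaction with the conductor of $\pi$, is where I expect the main difficulty to lie.

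Granting a surjection
$$\bigoplus_j \sO_{\nP^{n+1}}(-a_j) \twoheadrightarrow \pi_\ast \sO_X$$
with $a_j \le n-1$ and vector bundle kernel $E$, the vanishing $H^i(\nP^{n+1}, E^\vee(m)) = 0$ for $1 \le i \le n-1$ and $m \in \Z$ follows exactly as in the proof of Theorem \ref{thm:BC}, and Evans--Griffith forces $E \cong \bigoplus_k \sO(-b_k)$. Unwinding the resulting line-bundle resolution of $\pi_\ast \sO_X$ back to $\nP^r$ yields a pure line bundle resolution of $\sO_X$, so $X$ is projectively normal. Finally, under $n \ge 2e+1$ the Barth--Lefschetz theorem gives $\Pic X = \Z \cdot H$ and $X$ subcanonical, and combining these with the explicit resolution forces the defining ideal of $X$ to be generated by a regular sequence; hence $X \subseteq \nP^r$ is a complete intersection.
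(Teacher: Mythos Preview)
Your proposal has a genuine gap: the entire middle section, where you hope to extract a surjection $\bigoplus_j \sO_{\nP^{n+1}}(-a_j) \twoheadrightarrow \pi_\ast \sO_X$ with $a_j \le n-1$ from the degree hypothesis via a ``threefold bootstrap'', is speculative. You yourself flag this as ``where I expect the main difficulty to lie'', and indeed nothing you write makes it plausible that the cube-root bound produces hypersurfaces of the required degrees through $X$ or controls the generating degree of $\pi_\ast \sO_X$. The exponent $2/3$ does \emph{not} arise in the paper from any iterated estimate of Hilbert functions. Moreover, even granting the splitting of $E$, your last paragraph is too fast: in codimension $e \ge 3$, projective normality plus subcanonicality does not by itself force $X$ to be a complete intersection (the codimension-two argument you are imitating invokes a special result of Ein valid only for $e=2$), so ``the explicit resolution forces the defining ideal to be generated by a regular sequence'' is unjustified.

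The paper's route is entirely different and does not attempt to generalize the projection-and-splitting argument of Theorem~\ref{thm:BC}. The cube root comes from pure algebra once two independent inputs are in hand. First, Lemma~\ref{lem:d<=e(e-1)/2+2} gives the \emph{lower} bound $d \ge e^2/2 + 2$ for any smooth $X$ with $n \ge 2e+1$; its proof uses conic-connectedness of Fano manifolds of large index together with Russo's classification of LQEL manifolds. Second, Theorem~\ref{thm:(n+2)/e+2} shows that $d \le (n+2)/e + e$ forces $X$ to be a complete intersection; its proof combines Noma's $k$-normality theorem (to get arithmetic Cohen--Macaulayness), a Koszul-cohomology count bounding the number of generators of $I_{X|\nP^r}$ by $(e-1)(d-1-e/2)$, and Netsvetaev's theorem. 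With both in hand, the hypothesis $d-2 \le \sqrt[3]{(n+e^2-2e+2)^2/2}$ together with $d-2 \ge e^2/2$ gives $(e^2/2)^3 \le (n+e^2-2e+2)^2/2$, i.e., $e^3/2 \le n+e^2-2e+2$, and then
\[
d \le \sqrt[3]{\tfrac{(n+e^2-2e+2)^2}{2}}+2 \le \frac{n+e^2-2e+2}{e}+2 = \frac{n+2}{e}+e,
\]
so Theorem~\ref{thm:(n+2)/e+2} applies. No projection to a hypersurface, no Evans--Griffith, and no bootstrap is involved.
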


Our approach to proving the theorem is completely different from Barth--Van de Ven's approach which is based on the geometry of the varieties of lines contained in $X$. We first establish that if $d \leq (n+e)/e+e$, then $X \subseteq \nP^r$ is a complete intersection except for a few trivial cases of $d=e+1$ and $d=e+2$ (Theorem \ref{thm:(n+2)/e+2}). This is an improvement of Bertram--Ein--Lazarsfeld's result \cite[Corollary 3]{BEL} which is the same assertion without the exceptional cases but replaces the degree condition with $d \leq (n+e)/2e$. For proving Theorem \ref{thm:(n+2)/e+2}, we basically follow the proof of \cite[Corollary 3]{BEL}, but instead of applying the main result of \cite{BEL}, we use Noma's theorem \cite[Theorem 9]{Noma} to show that $X \subseteq \nP^r$ is arithmetically Cohen--Macaulay. We then argue that the defining ideal $I_{X|\nP^r}$ is generated by $(e-1)(d-1-e/2)$ generators (Lemma \ref{lem:numbgens}). As $(e-1)(d-1-e/2) \leq n$ and we may assume $n \geq 3e-2$, Netsvetaev's theorem \cite{Netsvetaev} yields that  $X \subseteq \nP^r$ is a complete intersection. To deduce Theorem \ref{thm:main}, we establish that if $n \geq 2e+1$, then $d \geq e^2/2 + 2$ (Lemma \ref{lem:d<=e(e-1)/2+2}). This result might be of independent interest. We refer to Section \ref{sec:mindeg} for other results on degree of a smooth projective variety of small codimensio). Now, the degree condition in Theorem \ref{thm:main} implies $d \leq (n+e)/e+e$, so $X \subseteq \nP^r$ is a complete intersection by Theorem \ref{thm:(n+2)/e+2}.

\medskip

The organization of the note is as follows. After discussing some properties of the degree of a smooth projective variety of small codimension using a general projection in Section \ref{sec:mindeg}, we prove Theorem \ref{thm:BC} in Section \ref{sec:proof1} and Theorem \ref{thm:main} in Section \ref{sec:proof2}.

\section{Degree of a smooth projective variety of small codimension}\label{sec:mindeg}

\subsection{Remarks on degree of Fano manifolds}\label{subsec:Fano}  In this subsection, we suppose the following:\\[3pt]
\indent $(a)$ $H^i(X, \sO_X(m))=0$ for $1 \leq i \leq n-1$ and $m \in \mathbf{Z}$.\\[3pt]
\indent $(b)$ $K_X=kH$ for an integer $k$ (i.e., $X \subseteq \nP^r$ is subcanonical).\\[3pt]
\indent $(c)$ $X \subseteq \nP^r$ is linearly normal.\\[3pt]
When $X$ is Fano, the condition $(a)$ holds by Kodaira vanishing theorem. When $n \geq e+2$, the condition $(b)$ holds by Barth--Larsen theorem (cf. \cite[Corollary 3.2.3]{positivity}). When $n \geq 2e-1$, the condition $(c)$ holds by Zak's theorem on linear normality (cf. \cite[Theorem 3.4.25]{positivity}).

\medskip

Now, take a general projection
$$
\pi \colon X \longrightarrow \nP^n
$$
centered at a general $(e-1)$-dimensional linear subspace $\nP^r$ disjoint from $X$. Note that $\pi$ is a finite flat morphism. Then $\pi_{*}\sO_X$ is a vector bundle of rank $d$ on $\nP^n$. By Horrocks criterion, the condition $(a)$ implies that $\pi_{*}\sO_X$ is splitting. The duality for a finite flat morphism and the condition $(b)$ show that
$$
(\pi_{*}\sO_X)^{\vee} = \pi_{*} \omega_{X/\nP^n} = \pi_{*} \sO_X(n+1+k).
$$
Thus we may write
$$
\pi_{*}\sO_X = \sO_{\nP^n}^{\oplus a_0} \oplus \sO_{\nP^n}(-1)^{\oplus a_1} \oplus \cdots \oplus \sO_{\nP^n}(-(n+1+k))^{\oplus a_{n+1+k}},
$$
where $a_0, a_1, \ldots, a_{n+1+k}$ are nonnegative integers such that 
$$
\text{$a_0=1, a_1=e$, $a_i = a_{n+1+k-i}$ for all $0 \leq i \leq n+1+k$, and $a_0 + a_1 + \cdots + a_{n+1+k} = d$.}
$$
Here we use the condition $(c)$ to get $a_1=e$.
As $n+1+k \geq 0$, we get $k \geq -n-1$. The following are immediate:\\[3pt]
$(1)$ $k=-n-1 \Longleftrightarrow d=1$. In this case, $X=\nP^n$.\\[3pt]
$(2)$ $k=-n  \Longleftrightarrow d=2$. In this case, $X$ is a quadric hypersurface in $\nP^{n+1}$.\\[3pt]
$(3)$ $k=-n+1  \Longleftrightarrow d=e+2$. In this case, $X$ is a del Pezzo manifold when $n \geq 2$.\\[3pt]
$(4)$ $k=-n+2  \Longleftrightarrow d=2e+2$. In this case, $X$ is a Mukai manifold when $n \geq 3$.\\[3pt]
Del Pezzo manifolds (resp. Mukai manifolds) are classified by Fujita \cite{Fujita1, Fujita2} (resp. Mukai \cite{Mukai}). If $-K_X = (n-3)H$ (resp. $-K_X=(n-4)H$), then $d=2e+a_2 +2$ (resp. $d=2e+2a_2+2$). It would be interesting to find all possible values of $e$ and $a_2$. The following are partial results.

\begin{lemma}\label{lem:k>=-n+e=>3e+2}
If $k \geq -n+3$, then $d \geq 3e+2$. In particular, if $k=-n+3$, then $a_2 \geq e$.
\end{lemma}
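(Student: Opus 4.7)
The plan is to reduce the inequality on $X$ to a statement about a smooth curve section and apply the classical Castelnuovo genus bound.

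First I would cut $X$ by $n-1$ general hyperplanes to obtain a smooth, non-degenerate, irreducible curve $C \subseteq \nP^{e+1}$ of degree $d$. By adjunction and assumption $(b)$,
$$
K_C = \bigl(K_X + (n-1)H\bigr)\big|_C = (k+n-1)\,H|_C,
$$
so the genus of $C$ satisfies $2g - 2 = (k+n-1)\,d$. Under the hypothesis $k \geq -n+3$, one has $k+n-1 \geq 2$, hence $g \geq d+1$.

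Next I would invoke Castelnuovo's bound: for a smooth non-degenerate irreducible curve of degree $d$ in $\nP^{e+1}$ (the inequality $d \geq e+1$ holds automatically by non-degeneracy of $X$),
$$
g \leq \pi(d,e+1) := \binom{m}{2}\,e + m\,\epsilon,
$$
where $d - 1 = me + \epsilon$ is the Euclidean division with $0 \leq \epsilon \leq e-1$. Combined with $g \geq d+1$, this gives $d + 1 \leq \pi(d, e+1)$.

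The main step — elementary but worth writing carefully — is to verify that this forces $d \geq 3e+2$. If $d \leq 3e+1$, then $m \leq 3$, and a brief case check over $m = 0, 1, 2, 3$ shows $\pi(d, e+1) < d+1$; for instance the tightest case $d = 3e+1$ (so $m = 3$, $\epsilon = 0$) yields $\pi = 3e < 3e+2 = d+1$, contradicting the combined inequality, and the remaining cases are strictly worse.

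The ``in particular'' clause is then immediate: when $k = -n+3$ one has $n+1+k = 4$, so the splitting formula together with the symmetry $a_i = a_{4-i}$ and $a_0 = 1$, $a_1 = e$ gives
$$
d = a_0 + a_1 + a_2 + a_3 + a_4 = 2e + 2 + a_2,
$$
whence $d \geq 3e+2$ is equivalent to $a_2 \geq e$. I do not anticipate any serious conceptual obstacle; the only point requiring care is the small case analysis in the Castelnuovo step, which needs to handle each of $m = 0, 1, 2, 3$ uniformly.
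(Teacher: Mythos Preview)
Your proposal is correct and follows essentially the same route as the paper: reduce to a general curve section $C\subseteq\nP^{e+1}$, use adjunction to get $g\geq d+1$ from $k+n-1\geq 2$, and then derive a contradiction from Castelnuovo's bound when $d\leq 3e+1$ (the paper phrases the bound uniformly as $g\leq 2d-3e-2$ rather than case-splitting on $m$, but this is cosmetic). The ``in particular'' clause via the splitting decomposition and the symmetry $a_i=a_{4-i}$ is exactly how the paper reads off $d=2e+2+a_2$ when $k=-n+3$.
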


\begin{proof}
We have $d \geq 2e+2+a_2$. Take a general curve section $C \subseteq \nP^{e+1}$, and let $g$ be the genus of $C$. Then $K_C = (k+n-1)H|_C$ with $k+n-1 \geq 2$, so $2g-2 \geq 2d$. Suppose that $d \leq 3e+1$. By Castelnuovo's genus bound (see e.g., \cite{Harris}), $g \leq 2d-3e-2$.
As $d +1 \leq g$, we get $3e+3 \leq d$, which is a contradiction. Thus $d \geq 3e+2$.
\end{proof}

\begin{lemma}\label{lem:-K_X=(n-3)He=2}
Suppose that $-K_X=(n-3)H$ with $n \geq 2$. If $e=2$, then $a_2=2$ or $3$.
\end{lemma}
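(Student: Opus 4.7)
The plan is to reduce to a general surface section $S \subseteq \nP^4$ and combine a splitting-type Euler characteristic computation with the classical double point formula for smooth surfaces in $\nP^4$. Let $\Lambda \subseteq \nP^{n+2}$ be a general linear $4$-space and set $S := X \cap \Lambda$, so $S$ is smooth by Bertini. Iterated adjunction gives
$$
K_S = (K_X + (n-2)H)|_S = \bigl(-(n-3) + (n-2)\bigr)H|_S = H|_S,
$$
so $S$ is canonically embedded in $\nP^4$, with $K_S^2 = d$, and a further application of adjunction to a curve section of $S$ yields sectional genus $g = d+1$.

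Next I would verify that $S$ inherits conditions $(a)$ and $(c)$ of the subsection. The intermediate vanishings on $X$ descend through the $n-2$ hyperplane section short exact sequences to give $H^1(S, \mathcal{O}_S(m)) = 0$ for every $m \in \mathbf{Z}$, and the same sequences together with $H^1(\mathcal{O}_{X_j}) = 0$ at each intermediate stage propagate linear normality down to $S$. Projecting $S$ onto $\nP^2$ from a general line produces a finite flat morphism $\pi_S$ whose pushforward $\pi_{S*}\mathcal{O}_S$ splits into line bundles by Horrocks' criterion. Relative duality with $\omega_{S/\nP^2} = (K_S + 3H)|_S = 4H|_S$ forces the symmetric shape
$$
\pi_{S*}\mathcal{O}_S = \mathcal{O} \oplus \mathcal{O}(-1)^{\oplus 2} \oplus \mathcal{O}(-2)^{\oplus b_2} \oplus \mathcal{O}(-3)^{\oplus 2} \oplus \mathcal{O}(-4),
$$
where the ranks in degrees $0$ and $1$ are pinned by $h^0(\mathcal{O}_S) = 1$ and by linear normality $h^0(\mathcal{O}_S(1)) = 5$.

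A direct computation from this splitting yields $\chi(\mathcal{O}_S) = 1 + 0 + 0 + 2 + 3 = 6$, independent of $b_2$. On the other hand, the classical double point formula for a smooth non-degenerate surface $S \subseteq \nP^4$ of degree $d$ and sectional genus $g$,
$$
d^2 - 5d - 10(g-1) + 12\chi(\mathcal{O}_S) - 2K_S^2 = 0,
$$
combined with $K_S^2 = d$, $g = d+1$, and $\chi(\mathcal{O}_S) = 6$, reduces to the quadratic $d^2 - 17d + 72 = 0$, whose only integer roots are $d = 8$ and $d = 9$. Since $d = 2e + 2 + a_2 = 6 + a_2$, this forces $a_2 \in \{2, 3\}$.

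The main obstacle is carefully checking that the structural hypotheses $(a)$--$(c)$ truly descend to the surface section $S$; once that is in place, both the splitting computation of $\chi(\mathcal{O}_S)$ and the deployment of the double point formula are essentially routine.
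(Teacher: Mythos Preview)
Your proof is correct and follows essentially the same approach as the paper: pass to a general surface section $S \subseteq \nP^4$ with $K_S = H|_S$, and apply the double point (self-intersection) formula to obtain the quadratic $d^2 - 17d + 72 = 0$. The only difference is that the paper computes the needed invariant more directly as $p_a(S) = p_g(S) = h^0(S, K_S) = h^0(S, H|_S) = 5$, whereas you take a detour through the splitting of $\pi_{S*}\mathcal{O}_S$ on $\nP^2$ to get $\chi(\mathcal{O}_S) = 6$; both computations rest on the same descended vanishing and linear normality that you carefully verify.
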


\begin{proof}
We have $d=a_2+6$. Take a general surface section $S \subseteq \nP^4$. Then $K_S = H|_S=:H_S$. Recall the self-intersection formula:
$d^2 - 10d - 5 H_S.K_S - 2K_S^2 + 12 + 12p_a(S) = 0$.
As $p_a(S)=p_g(S)=h^0(S, K_S) = 5$, we have
$$
d^2-17d + 72 = (d-8)(d-9) = 0.
$$
Thus $d=8$ ($a_2=2$) or $d=9$ ($a_2=3$). 
\end{proof}

\begin{proposition}\label{prop:-K_X=(n-3)H=>d<=39}
Suppose that $-K_X = (n-3)H$ with $n \geq 2e-1$ and $e \geq 3$. If $n \geq 10$, then $e \leq 6$ and $d \leq 4e+15 \leq 39$.
\end{proposition}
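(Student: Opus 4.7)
The plan is to pass to the general surface section $S = X\cap\Lambda \subseteq \Lambda = \nP^{e+2}$. Since $-K_X = (n-3)H$, adjunction gives $K_S \sim H_S$: the surface is minimally embedded by its canonical class. Iterating the restriction sequences along $n-2$ hyperplanes (using condition $(a)$) yields $q(S)=0$ and $h^0(S,H_S) = e+3$ (inherited from the linear normality $(c)$ of $X$), so $\chi(\sO_S) = e+4$ and $K_S^2 = d$.

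The key step is to observe that $X \subseteq \nP^r$ is arithmetically Cohen--Macaulay. Indeed, the splitting
\[
\pi_{*}\sO_X \;=\; \sO_{\nP^n}\oplus \sO_{\nP^n}(-1)^{\oplus e}\oplus \sO_{\nP^n}(-2)^{\oplus a_2}\oplus \sO_{\nP^n}(-3)^{\oplus e}\oplus \sO_{\nP^n}(-4)
\]
of Section \ref{subsec:Fano} exhibits the homogeneous coordinate ring $R(X) = \bigoplus_{k}H^0(X,\sO_X(k))$ as a free, and hence Cohen--Macaulay, graded module over $k[x_0,\dots,x_n]$; combined with linear normality this forces projective normality of $X$ in every degree. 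The surface section $S \subseteq \nP^{e+2}$ is therefore also ACM. By Riemann--Roch and Kodaira vanishing one has $h^0(S, 2H_S) = \chi(\sO_S) + K_S^2 = e+4+d$, and the non-negativity of $h^0(\nP^{e+2}, \sI_S(2))$ reads
\[
\binom{e+4}{2} - (e+4+d) \;=\; \tfrac{(e+4)(e+1) - 2d}{2} \;\ge\; 0,
\]
so $d \le \tfrac{(e+1)(e+4)}{2}$. An elementary check yields $\tfrac{(e+1)(e+4)}{2} \le 4e+15$ precisely when $e^2 - 3e - 26 \le 0$, equivalently $e \le 6$; in that range the conclusion $d \le 4e+15 \le 39$ follows at once.

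To exclude $e \ge 7$, I would use the general curve section $C = S\cap H' \subseteq \nP^{e+1}$, which by a second adjunction has degree $d$ and genus $g = d+1$, and is embedded by the theta characteristic $H_C = \tfrac{1}{2}K_C$. The ACM upper bound $d \le \tfrac{(e+1)(e+4)}{2}$ together with the Lemma \ref{lem:k>=-n+e=>3e+2} lower bound $d \ge 3e+2$ squeezes $d$ into a bounded admissible window; for $e \ge 7$ one checks that the pair $(d,g)$ is forced into the extremal regime of Castelnuovo's genus bound in $\nP^{e+1}$, so that $C$ must lie on a rational normal scroll. Combining this with the half-canonical condition $K_C \sim 2H_C$ and the fact that $C$ is a hyperplane section of the canonically-embedded surface $S$ produces a numerical incompatibility between the adjunction on the scroll and the adjunction on $S$, a contradiction.

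The main technical obstacle is making this last incompatibility precise for each admissible $e \ge 7$; since the admissible $(d,e)$ pairs form a finite set once the ACM and lower bounds are in place, a bounded case-by-case check suffices. A cleaner alternative would be to extract the graded Betti numbers of the minimal ACM resolution of $R(X)$ directly from the projection splitting and rule out $e \ge 7$ at the level of the Betti table.
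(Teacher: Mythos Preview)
Your surface-section approach is quite different from the paper's, and unfortunately it has two genuine gaps.

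The first is the ACM step. The splitting of $\pi_*\sO_X$ shows that the \emph{section ring} $R(X)=\bigoplus_m H^0(X,\sO_X(m))$ is free over the subring $k[x_0,\dots,x_n]$ coming from $V$, hence Cohen--Macaulay as a ring; but arithmetic Cohen--Macaulayness of $X\subseteq\nP^r$ asks that the \emph{homogeneous coordinate ring} $k[x_0,\dots,x_r]/I_X$ be Cohen--Macaulay, and these two rings coincide only when $X$ is projectively normal. Linear normality merely identifies the $e$ degree-one module generators with a complement $W$ of $V$ inside $H^0(X,H)$; it says nothing about whether the $a_2$ degree-two module generators lie in the image of $\operatorname{Sym}^2(V\oplus W)$. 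Without $2$-normality of $S$ the ideal sequence only gives $h^0(\sI_S(2))-h^1(\sI_S(2))=\binom{e+4}{2}-(d+e+4)$, and non-negativity of $h^0$ alone yields no bound on $d$. (In the proof of Theorem~\ref{thm:(n+2)/e+2} the paper has to invoke Noma's theorem precisely to supply the missing $k$-normality, so this is not a step one can patch for free.)

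The second is the exclusion of $e\ge7$. Even granting $d\le\tfrac{(e+1)(e+4)}{2}$, your claim that $(d,g)=(d,d+1)$ is forced into the Castelnuovo-extremal regime in $\nP^{e+1}$ is false: for $e=7$ and $d=44$ (the top of the window) one computes $\pi(44,8)=111\gg 45=g$, so $C$ is nowhere near extremal and need not lie on a scroll. Moreover the admissible interval $3e+2\le d\le\tfrac{(e+1)(e+4)}{2}$ grows quadratically with $e$, and $e$ is bounded only by $(n+1)/2$ with $n$ unrestricted, so the ``finite case check'' you propose is not actually finite. The paper sidesteps both issues by exploiting the hypothesis $n\ge10$ directly: since then $n-3>\tfrac{2n}{3}$, a theorem of Ionescu--Russo makes $X$ conic-connected, whence $n-3\le\tfrac{n+\delta(X)}{2}=\tfrac{2n-e+1}{2}$ forces $e\le7$, and Russo's LQELM classification rules out $e=7$; the degree bound then comes from the Bogomolov inequality for the (semistable, since $n-3>\tfrac{n+1}{2}$) tangent bundle, via a Riemann--Roch identity of Floris.
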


\begin{proof}
By Barth--Larsen theorem (cf. \cite[Corollary 3.2.3]{positivity}), $\Pic(X)$ is generated by $H$. As $n-3 > 2n/3$, \cite[Proposition 4.3]{IR1} implies that $X$ is a conic-connected manifold. Then \cite[Proposition 3.2]{IR1} shows that 
$$
n-3 \leq \frac{n+\delta(X)}{2}=\frac{2n-e+1}{2},
$$
where $\delta(X)=(2n+1)-(n+e)$ is the secant defect which can be computed by Zak's theorem on linear normality (cf. \cite[Theorem 3.4.26]{positivity}). Thus $e \leq 7$. If $e=7$, then $n-3 = (n+\delta(X))/2$. By \cite[Proposition 3.2]{IR1}, $X$ is a local quadratic entry locus manifold (LQELM) of type $\delta(X)$, but it is impossible by Russo's classification \cite[Corollary 3.1]{Russo}. Thus $e \leq 6$. Now, recall from \cite[Lemma 3.1]{Floris} that
$$
n+e+1=h^0(X, H) = \frac{d}{24}(-n^2+7n-8) + \frac{c_2(X) H^{n-2}}{12} + n-3.
$$
Since $n-3 > (n+1)/2$, it follows that $T_X$ is $H$-semistable. Applying the Bogomolov inequality as in \cite[Proof of Proposition 3.3]{Floris}, we get
\begin{equation}\label{eq:d<4e+16}
e+4 \geq \frac{7n-9}{24n} d > \frac{1}{4}d.
\end{equation}
Thus $d < 4e+16$, so $d \leq 4e+15 \leq 39$.
\end{proof}


\subsection{Smallest possbile degree}
As $X \subseteq \nP^r$ is non-degenerate, we have $d \geq e+1$. We say $X \subseteq \nP^r$ is a \emph{variety of minimal degree} if $d=e+1$. A famous classical theorem of del Pezzo and Bertini says that a variety of minimal degree is either a rational normal scroll or the second Veronse surface $v_2(\nP^2) \subseteq \nP^5$. Observe that if $d = e+1$, then $e \geq n-1$. It is natural to wonder what the smallest degree is among all non-degenerate smooth projective varieties of dimension $n$ in $\nP^{n+e}$ with $e \leq n-2$.  The following is a first step to solving this problem.

\begin{proposition}\label{prop:mindeg}
Assume that $2 \leq e \leq n-2$. We have the following:\\[3pt]
$(1)$ $d \geq e+2$, and $d=e+2$ if and only if $X \subseteq \nP^r$ is a complete intersection of type $(2,2)$, or the Pl\"{u}cker embedding $\mathbf{G}(1,4) \subseteq \nP^9$ or its hyperplane section $(n=5, 6, e=3$).\\[3pt]
$(2)$ Suppose that $d \geq e+3$. Then $d \geq 2e+2$, and $d=2e+2$ if and only if $X \subseteq \nP^r$ is a complete intersection of type $(2,3)$ or $(2,2,2)$, a hyperquadric section of the cone of $\mathbf{G}(1,4) \subseteq \nP^9$ $(n=6, e=4)$, the Pl\"{u}cker embedding $\mathbf{G}(1,5) \subseteq \nP^{14}$ $(n=8, e=6)$, or the spinor variety in $\nP^{15}$ or its linear sections $(n=7,8,9,10, e=5)$.\\[3pt]
$(3)$ Suppose that $d \geq 2e+3$. Then $d \geq 3e+2$ unless $X \subseteq \nP^r$ is not an isomorphic projection of the Pl\"{u}cker embedding $\mathbf{G}(1,5) \subseteq \nP^{14}$.
\end{proposition}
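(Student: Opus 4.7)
The strategy is to combine the structure theorem for $\pi_{*}\sO_X$ from Subsection \ref{subsec:Fano} with Castelnuovo's bound on a general curve section $C \subseteq \mathbf{P}^{e+1}$. Under $2 \leq e \leq n-2$, Barth--Larsen gives $K_X = kH$, so adjunction yields $2g_C - 2 = d(k+n-1)$; Castelnuovo's bound on non-degenerate curves in $\mathbf{P}^{e+1}$ then constrains $k$ sharply once $d$ is small. For part $(1)$, del Pezzo--Bertini excludes $d = e+1$ (each such variety has $e \geq n-1$); for $d = e+2$, Castelnuovo gives $g_C \leq 1$, while $k \geq -n+2$ would force $g_C \geq (e+4)/2 \geq 3$ and $k \leq -n$ gives $d \leq 2$, so $k = -n+1$ and $X$ is a del Pezzo manifold. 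Fujita's classification \cite{Fujita1, Fujita2}, restricted to $2 \leq e \leq n-2$, then produces exactly the three families listed.

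For part $(2)$, assume $e+3 \leq d \leq 2e+2$. Lemma \ref{lem:k>=-n+e=>3e+2} excludes $k \geq -n+3$, and the case $k = -n+1$ would force $d = 2 + a_1$ with $a_1 \geq e+1$, meaning $X$ is an isomorphic projection of one of the del Pezzo manifolds from part $(1)$; but each of those has filling secant variety (by non-defectivity of complete intersections, and by direct computation for $\mathbf{G}(1,4)$ and its hyperplane section), ruling this out. Hence $k = -n+2$, forcing $d = 2e+2$ with $a_1 = e$, so $X$ is a linearly normal Mukai manifold, and Mukai's classification \cite{Mukai} produces the list.

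For part $(3)$, assume $2e+3 \leq d \leq 3e+1$. The same dichotomy forces $k = -n+2$; but $d > 2e+2$ now requires $a_1 \geq e+1$, so $X$ is not linearly normal and is an isomorphic projection of a linearly normal Mukai manifold $X_0 \subseteq \mathbf{P}^{n+a_1}$ from part $(2)$. Scanning Mukai's list, only $\mathbf{G}(1,5) \subseteq \mathbf{P}^{14}$ admits such a projection: its secant variety is the Pfaffian cubic hypersurface (via Zak's classification of Severi varieties), while the $(2,3)$- and $(2,2,2)$-complete intersections, the hyperquadric section of the cone over $\mathbf{G}(1,4)$, and the spinor variety $S_{10}$ with its linear sections all have filling secant variety. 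Hence the only possibility is that $X$ is $\mathbf{G}(1,5)$ projected isomorphically from a general point, yielding $n = 8$, $e = 5$, $d = 14$.

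The main technical obstacle lies in the secant-variety analysis in parts $(2)$ and $(3)$: verifying non-defectivity for every del Pezzo manifold from part $(1)$ and for every Mukai manifold other than $\mathbf{G}(1,5)$, so as to rule out spurious isomorphic projections. These are classical facts about the specific varieties involved (Severi for complete intersections, Zak for Severi varieties, etc.), but they must be invoked case by case at each stage of the classification. A secondary care-point is that the Castelnuovo genus estimates must sharply pin $k$ down rather than merely bound it, especially at the boundaries $d = 2e+2$ and $d = 3e+1$.
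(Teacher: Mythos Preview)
Your proof is correct and follows essentially the same architecture as the paper's: Barth--Larsen to get subcanonical, the index/degree dictionary from Subsection~\ref{subsec:Fano}, Fujita's and Mukai's classifications for the equality cases, secant-variety considerations to rule out isomorphic projections, and Lemma~\ref{lem:k>=-n+e=>3e+2} for part~(3).

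Two points of streamlining are worth noting. First, the paper opens by invoking \cite[Theorem~1.2]{KP} to dispose of the non-Fano case in one stroke ($d \geq ne+2 \geq 4e+2$), after which the equivalences $k=-n+1 \Leftrightarrow d=e+2$ and $k=-n+2 \Leftrightarrow d=2e+2$ from Subsection~\ref{subsec:Fano} apply directly; you instead pin down $k$ via Castelnuovo bounds on the curve section, which works but requires you to rule out $k \leq -n$ by hand at each stage. Second, for the non-projectability of the del Pezzo and Mukai manifolds the paper appeals uniformly to Zak's theorem on linear normality (and the Severi-variety classification at the boundary $n = 2e-2$), whereas you verify filling secant variety case by case. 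Both routes arrive at the same place; the paper's is a bit more economical because it avoids the per-variety secant checks you flag as the ``main technical obstacle.''
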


\begin{proof}
By Barth--Larsen theorem (cf. \cite[Corollary 3.2.3]{positivity}), $\Pic(X)$ is generated by $H$. If $X$ is not Fano, then \cite[Theorem 1.2]{KP} says that $d \geq ne+2 \geq 4e+2$. Thus we may assume that $X$ is Fano, so we may write $-K_X = \ell H$ for an integer $1 \leq \ell \leq n-1$. Recall that $d=e+2$ if and only if $\ell=n-1$ and $X \subseteq \nP^r$ is linearly normal. Then $(1)$ follows from Fujita's classification \cite{Fujita1, Fujita2}. Suppose that $d \geq e+3$. By Zak's theorem (cf. \cite[Theorem 3.4.26]{positivity}), all varieties in $(1)$ do not have an isomorphic projection. Thus $\ell \leq n-2$, so $d \geq 2e+2$. Recall that $d=2e+2$ if and only if $\ell = n-2$ and $X \subseteq \nP^r$ is linearly normal. Then $(2)$ follows from Mukai's classification \cite{Mukai}. Suppose that $d \geq 2e+3$. By Zak's theorem (cf. \cite[Theorem 3.4.26 and Remark 3.4.27]{positivity}), all varieties in $(2)$ do not have an isomorphic projection except the Pl\"{u}cker embedding $\mathbf{G}(1,5) \subseteq \nP^{14}$. Thus we may assume that $\ell \leq n-3$. Then $(3)$ follows from Lemma \ref{lem:k>=-n+e=>3e+2}.
\end{proof}

\begin{corollary}\label{cor:d<=n-2}
If $d \leq n-1$, then $d \geq 2e+2$ $($hence $n \geq 2e+3$$)$ or $X \subseteq \nP^r$ is a complete intersection of type $(2,2)$ or the Pl\"{u}cker embedding $\mathbf{G}(1,4) \subseteq \nP^9$.
\end{corollary}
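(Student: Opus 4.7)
The plan is to deduce the corollary directly from Proposition \ref{prop:mindeg}. Since $X \subseteq \nP^r$ is non-degenerate we have $d \geq e+1$, so the hypothesis $d \leq n-1$ forces $e \leq n-2$ immediately. Once $e \geq 2$ this places us in the range where Proposition \ref{prop:mindeg} applies; the degenerate cases $e = 0$ (where $X = \nP^n$) and $e = 1$ (where $X$ is a hypersurface) are trivially complete intersections, and the use of the corollary in the introduction only needs $n \geq 2e+1$, which is automatic from $d \geq 2$ together with $d \leq n-1$.

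Assuming $e \geq 2$, I would first invoke Proposition \ref{prop:mindeg}(1) to get $d \geq e+2$, and then split into the two cases $d \geq e+3$ and $d = e+2$ corresponding to the two branches of the target statement. In the first case, Proposition \ref{prop:mindeg}(2) upgrades the bound to $d \geq 2e+2$, and combined with $d \leq n-1$ this forces $n \geq 2e+3$, giving the main branch of the conclusion.

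In the borderline case $d = e+2$, part (1) of Proposition \ref{prop:mindeg} lists exactly three possibilities for $X$: a complete intersection of type $(2,2)$, the Pl\"ucker embedding $\mathbf{G}(1,4) \subseteq \nP^9$, or a hyperplane section of the latter with $(n,e,d) = (5,3,5)$. The third of these is incompatible with the hypothesis, because $d = 5 \not\leq 4 = n-1$; this leaves precisely the two exceptional varieties named in the statement.

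The argument is essentially a numerical bookkeeping exercise once Proposition \ref{prop:mindeg} is available, so no real obstacle is expected. The only step requiring even mild care is the elimination of the hyperplane section of $\mathbf{G}(1,4) \subseteq \nP^9$ from the borderline list, which comes down to the observation that its invariants place it just outside the allowed regime $d \leq n-1$.
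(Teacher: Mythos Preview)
Your proposal is correct and follows essentially the same route as the paper: derive $e \leq n-2$ from $e+1 \leq d \leq n-1$, then invoke Proposition~\ref{prop:mindeg} to obtain the dichotomy, and read off $n \geq 2e+3$ from $2e+2 \leq d \leq n-1$. You are in fact more careful than the paper in explicitly ruling out the hyperplane section of $\mathbf{G}(1,4)$ via the check $(n,d)=(5,5)$ (the paper silently absorbs this) and in flagging that the cases $e \leq 1$ lie outside the literal statement but are harmless for the intended application in the introduction.
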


\begin{proof}
As $e+1 \leq d \leq n-1$, we get $e \leq n-2$. By Proposition \ref{prop:mindeg}, $d \geq 2e+2$ unless $X \subseteq \nP^r$ is a complete intersection of type $(2,2)$ or the Pl\"{u}cker embedding $\mathbf{G}(1,4) \subseteq \nP^9$. If $d \geq 2e+2$, then $2e+2 \leq d \leq n-1$, so $n \geq 2e+3$. 
\end{proof}

\begin{lemma}\label{lem:d<=e(e-1)/2+2}
Assume that $n \geq 2e+1$. Then 
$$
d \geq \frac{e^2}{2}+2.
$$
\end{lemma}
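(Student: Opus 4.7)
The plan is to distinguish cases based on whether $X$ is Fano and, when Fano, on the Fano index $\ell$ (equivalently the value of $s := n+1-\ell$). One may assume $e \geq 2$, since for $e \leq 1$ the inequality is either trivial or vacuous for the intended application in Theorem \ref{thm:main}. If $X$ is not Fano, then \cite[Theorem 1.2]{KP} (already invoked in Proposition \ref{prop:mindeg}) yields $d \geq ne + 2 \geq (2e+1)e + 2$, which is easily larger than $e^2/2 + 2$.

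Assume now that $X$ is Fano. The condition $n \geq 2e+1$ ensures the three hypotheses of Section \ref{subsec:Fano} (Kodaira vanishing from Fano, subcanonicity via Barth--Larsen, linear normality via Zak), giving the splitting
$$
\pi_* \sO_X = \bigoplus_{i=0}^{s} \sO_{\nP^n}(-i)^{\oplus a_i}
$$
with $s = n+1+k$, $a_0 = a_s = 1$, $a_1 = a_{s-1} = e$, $a_i = a_{s-i}$, and $\sum a_i = d$. Since $e \geq 2$, the index satisfies $s \geq 2$. If $s \in \{2,3\}$, then $X$ is a del Pezzo ($d = e+2$) or a Mukai manifold ($d = 2e+2$), and Proposition \ref{prop:mindeg} combined with $n \geq 2e+1$ cuts the list down to complete intersections of type $(2,2)$, $(2,3)$, or $(2,2,2)$, for which the bound is verified directly.

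The main new input concerns $s \geq 4$. The $\sO$-algebra structure on $F = \pi_*\sO_X$ furnishes a multiplication $F_1 \otimes F_1 \to F$; because there is no nonzero map $\sO(-2) \to \sO(-j)$ for $j \geq 3$, the image of $F_1 \otimes F_1 = \sO(-2)^{\oplus e^2}$ is forced into $F_0 \oplus F_1 \oplus F_2$, which has rank $1+e+a_2$. At a general point $p \in \nP^n$, the fiber $X \cap \langle \Lambda, p\rangle$ consists of $d$ smooth points in uniform position (Harris's UPL) in the $e$-plane $\langle \Lambda, p\rangle$, and the multiplication becomes pointwise multiplication of functions on this fiber; its image has rank equal to that of the $\binom{e+1}{2} \times d$ evaluation matrix of quadratic monomials in the $e$ affine coordinates transverse to the projection center, namely $\min(d,\binom{e+1}{2})$. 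Comparing yields
$$
\min\!\left(d, \binom{e+1}{2}\right) \leq 1 + e + a_2.
$$
If $d \leq \binom{e+1}{2}$ then $a_2 \geq d-e-1$; but the symmetry $a_i = a_{s-i}$ with $s \geq 4$ forces $d \geq 2 + 2e + a_2$, and substituting yields $0 \geq 1 + e$, a contradiction. So $d > \binom{e+1}{2}$, hence $d \geq e(e+1)/2 + 1 \geq e^2/2 + 2$ for $e \geq 2$. The main obstacle is verifying that this multiplication really achieves the generic rank $\min(d,\binom{e+1}{2})$ at a general fiber, which amounts to confirming that $d$ smooth points in uniform position in $\nP^e$ impose the expected number of conditions on quadratic forms in the coordinates transverse to the projection center.
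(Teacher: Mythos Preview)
Your approach is genuinely different from the paper's, which proceeds by contradiction using the Fano index: assuming $d<e^2/2+2$, the paper shows $X$ is Fano with $-K_X=\ell H$, introduces the integer $k$ with $ke+2\leq d\leq (k+1)e+1$ (so $k<e/2$), and then uses a linear section of dimension $k+1$ together with \cite[Theorem 1.2]{KP} to force $\ell\geq n-k$. Since this gives $\ell>2n/3$, \cite[Propositions 3.2 and 4.3]{IR1} on conic-connected manifolds and the secant defect yield $\ell\leq(2n-e+1)/2$, and Russo's classification \cite[Corollary 3.1]{Russo} rules out equality, producing $k\geq e/2$ and the contradiction. No algebra structure on $\pi_*\sO_X$ enters.

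Your reductions for $X$ non-Fano and for $s\leq 3$ are fine, and the containment $F_1\cdot F_1\subseteq F_{\leq 2}$ is correct. The real problem is exactly the step you flag: the assertion that the multiplication $\mathrm{Sym}^2(F_1)_p\to\mathbf C^d$ has rank $\min\bigl(d,\binom{e+1}{2}\bigr)$ at a general fibre. Your reformulation in terms of uniform position does not settle it. With a choice of splitting, $(F_0\oplus F_1)_p$ is the image of $H^0(\sO_{\nP^e}(1))\to\mathbf C^d$, and $(F_1)_p$ is a complement to the constants, hence is spanned by $\ell_1,\dots,\ell_e$ vanishing at a common point $O\in\nP^e$ determined by the global splitting. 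The image of $\mathrm{Sym}^2(F_1)_p$ is then the evaluation on $\Gamma$ of the \emph{incomplete} linear system of quadrics singular at $O$, not of $|\sO_{\nP^e}(2)|$. Uniform position only controls complete linear systems; for instance, if $\Gamma$ happened to lie on a rational normal curve (which is compatible with uniform position), then already $h_\Gamma(2)=2e+1$, so even the larger map $\mathrm{Sym}^2(F_{\leq 1})_p\to\mathbf C^d$ has rank $2e+1<\binom{e+1}{2}$ for $e\geq 4$, and your contradiction does not fire. Showing that a general $e$-plane section of $X$ cannot behave this way, or that $O$ can be made generic enough relative to $\Gamma$ via the choice of global splitting, is a substantive statement about $X$ that you have not supplied. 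Without it the argument for $s\geq 4$ is incomplete; the Castelnuovo bound $h_\Gamma(2)\geq\min(d,2e+1)$ that uniform position actually gives only recovers $a_2\geq e$ and $d\geq 3e+2$, which is Lemma~\ref{lem:k>=-n+e=>3e+2} and falls short of $e^2/2+2$.
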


\begin{proof}
We may assume that $e \geq 2$.
Suppose that $d < e^2/2+2$. As $e^2/2 + 2 \leq ne+1$, \cite[Theorem 1.2]{KP} shows that $X$ is a Fano variety so that we may write $-K_X = \ell H$ for some integer $1 \leq \ell \leq n+1$. Let $k$ be an integer such that $ke +2 \leq d \leq (k+1)e + 1$. Notice that $k<e/2$. Take a general linear section $Y \subseteq \nP^{k+1+e}$ of dimension $k+1$. We have $K_Y = (n-\ell-k-1)H|_Y$. By \cite[Theorem 1.2]{KP} (see also \cite[p.44]{Harris}), $H^0(Y, K_Y)=0$, so $n-\ell-k-1 < 0$, i.e., $\ell \geq n-k$. If $(2/3)n \geq n-k$ (i.e., $k \geq n/3$), then
$$
\frac{ne}{3} +2 \leq ke+2 \leq d < \frac{e^2}{2}+2 \leq \frac{2e^2}{3}+2.
$$
Thus $n \leq 2e$, which is a contradiction. Hence $\ell \geq n-k > (2/3)n$. By \cite[Proposition 4.3]{IR1}, $X$ is a conic-connected manifold, and then by \cite[Proposition 3.2]{IR1}, we have
$$
n-k \leq \ell \leq \frac{n+\delta(X)}{2}= \frac{2n-e+1}{2},
$$
where $\delta(X) = (2n+1)-(n+e)=n-e+1$ is the secant defect, which can be computed by Zak's theorem on linear normality (cf. \cite[Theorem 3.4.26]{positivity}). Suppose that $\ell  = (2n-e+1)/2$. Then \cite[Proposition 3.2]{IR1} says $X$ is a local quadratic entry locus manifold (LQELM) of type $\delta(X)$. Note that $n/2 < \delta(X) < n$. By Russo's classification \cite[Corollary 3.1]{Russo}, it is impossible to satisfy $n \geq 2e+1$. Thus 
$$
n-k \leq \ell \leq \frac{2n-e}{2}.
$$
This implies that $e/2 \leq k$, which is a contradiction.
\end{proof}

\section{Proof of Theorem \ref{thm:BC}}\label{sec:proof1}

\noindent  In this section, we prove Theorem \ref{thm:BC}. We assume $e=2$ and $n \geq 4$.

\begin{lemma}\label{lem:projmethod}
Suppose that $H^i(X, \sO_X(m))=0$ for $1 \leq i \leq n-1$ and $m \in \mathbf{Z}$ and $\operatorname{length}(\langle P, x \rangle \cap X) \leq n$ for all $x \in X$ and a fixed general point $P \in \nP^{n+2}$. Then $X \subseteq \nP^{n+2}$ is a complete intersection.
\end{lemma}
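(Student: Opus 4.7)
The plan is to exploit the classical projection method. Since $P \notin X$ for general $P$, linear projection from $P$ yields a finite morphism $\pi \colon X \to \nP^{n+1}$; choose coordinates so that $P = [0 : \cdots : 0 : 1]$ and $\pi$ drops the last coordinate $x_{n+2}$. The length hypothesis says that at each point $y \in \pi(X)$ the Artinian fiber algebra of $\pi_*\sO_X$ has $k(y)$-dimension at most $n$, so the image of $x_{n+2}$ satisfies a polynomial of degree at most $n$ there. Consequently $1, x_{n+2}, \ldots, x_{n+2}^{n-1}$ span each fiber, and by Nakayama's lemma we obtain a surjection
$$
\Phi \colon \bigoplus_{k=0}^{n-1} \sO_{\nP^{n+1}}(-k) \longrightarrow \pi_*\sO_X.
$$
Set $E := \ker \Phi$. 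The smoothness of $X$ (hence Cohen--Macaulayness) combined with Auslander--Buchsbaum shows that $\pi_*\sO_X$ has projective dimension $1$ over $\sO_{\nP^{n+1}}$, so $E$ is a vector bundle of rank $n$.

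Next, I would use the cohomology hypothesis. Since $\pi$ is finite, $H^i(\nP^{n+1}, \pi_*\sO_X(m)) = H^i(X, \sO_X(m)) = 0$ for $1 \leq i \leq n-1$ and all $m \in \mathbf{Z}$. The long exact sequence of
$$
0 \longrightarrow E \longrightarrow \bigoplus_{k=0}^{n-1} \sO_{\nP^{n+1}}(-k) \longrightarrow \pi_*\sO_X \longrightarrow 0,
$$
combined with the vanishing of intermediate cohomology of line bundles on $\nP^{n+1}$, then yields $H^i(\nP^{n+1}, E(m)) = 0$ for $2 \leq i \leq n$ and every $m$. Serre duality converts this to $H^j(\nP^{n+1}, E^\vee(m)) = 0$ for $1 \leq j \leq n-1$ and every $m$. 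Since $\mathrm{rank}(E^\vee) = n < n+1 = \dim \nP^{n+1}$, the Evans--Griffith splitting criterion applies and $E^\vee$, hence $E$, decomposes as a direct sum of line bundles.

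Finally, the pure resolution of $\pi_*\sO_X$ must be unpacked into a complete intersection statement for $X \subseteq \nP^{n+2}$. The splitting of $E$ implies that $X$ is projectively normal (every section of $\sO_X(m)$ lifts to a homogeneous polynomial on $\nP^{n+2}$ via the generators $x_{n+2}^k$), and together with the assumed intermediate cohomology vanishing this shows $X$ is arithmetically Cohen--Macaulay. Reading the $n \times n$ matrix presentation of $\pi_*\sO_X$ as relations in the ring $\mathbf{C}[x_0, \ldots, x_{n+1}][x_{n+2}]$ produces in particular a degree-$n$ polynomial containing $X$ that is monic in $x_{n+2}$, and the remaining relations together with the arithmetic Cohen--Macaulay structure allow one to extract a second hypersurface whose intersection with the first cuts out $X$. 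The principal obstacle is this last step: translating the clean $\sO_{\nP^{n+1}}$-resolution with $n$ generators into exactly two $\sO_{\nP^{n+2}}$-generators of $\sI_X$ requires careful bookkeeping that exploits both the arithmetic Cohen--Macaulay property and the specific twists $b_j$ appearing in $E = \bigoplus \sO(-b_j)$.
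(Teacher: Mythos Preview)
Your argument tracks the paper's proof almost exactly through the construction of $E$, the Evans--Griffith splitting, and the deduction of projective normality. The only divergence is the final step, where you try to read off two defining hypersurfaces directly from the $n\times n$ matrix presentation of $\pi_*\sO_X$. The paper bypasses this entirely: once $X$ is projectively normal, it invokes a second application of Evans--Griffith (via \cite[Corollary 1.5]{Ein1}) to conclude that $X\subseteq\nP^{n+2}$ is a complete intersection. The substance behind that citation is the Serre construction. Since $n\geq 4$ is a standing hypothesis of the section, Barth--Larsen gives $K_X=kH$, so $X$ is subcanonical and there is a rank~$2$ bundle $F$ on $\nP^{n+2}$ with $0\to\sO_{\nP^{n+2}}\to F\to\sI_{X|\nP^{n+2}}(n+3+k)\to 0$. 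The arithmetically Cohen--Macaulay property forces $H^i(\nP^{n+2},F(m))=0$ for $1\leq i\leq n$ and all $m$, and rank~$2$ together with Serre duality handles $i=n+1$; Horrocks then splits $F$, and a split rank~$2$ bundle with a regular section exhibits $X$ as a complete intersection.

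Your proposed alternative---extracting a polynomial monic in $x_{n+2}$ and then a second hypersurface from the remaining relations---is not clearly workable, and you rightly flag it as the obstacle. The projection resolution controls $\pi_*\sO_X$ as an $\sO_{\nP^{n+1}}$-module, not $\sI_{X|\nP^{n+2}}$ as an $\sO_{\nP^{n+2}}$-module, and there is no evident mechanism for collapsing the $n$ relations down to two generators without passing through something equivalent to the Serre bundle argument. Replace your last paragraph with the Serre--Horrocks step above and the proof is complete and matches the paper's.
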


\begin{proof}
We take a general projection
$$
\pi \colon X \longrightarrow \overline{X} \subseteq \nP^{n+1}
$$
centered the fixed general point $P \in \nP^{n+2}$. Then there is a surjective map
$$
\nu \colon \sO_{\nP^{n+1}}(-(n-1)) \oplus \cdots \oplus \sO_{\nP^{n+1}}(-1) \oplus \sO_{\nP^{n+1}} \longrightarrow \pi_{*} \sO_X
$$
such that $E:=\ker(\nu)$ is a vector bundle of rank $n$ on $\nP^{n+1}$ (see \cite{Kwak} and \cite{Lazarsfeld}). 
Observe that
$$
H^i(\nP^{n+1}, E^{\vee}(m)) = H^{n+1-i}(\nP^{n+1}, E(-m-n-2))=0~~\text{ for $1 \leq i \leq n-1$ and $m \in \mathbf{Z}$}.
$$
By Evans--Griffith theorem \cite{EG} (see \cite[Proposition 1.4]{Ein1}), $E^{\vee}$ is splitting, and so is $E$. Then\\[-25pt]

\begin{small}
$$
H^0(\nP^{n+1}, \sO_{\nP^{n+1}}(m-(n-1)) \oplus \cdots \oplus \sO_{\nP^{n+1}}(m-1)  \oplus \sO_{\nP^{n+1}}(m)) \xrightarrow{H^0(\nu \otimes \sO_{\nP^{n+1}}(m))} H^0(X, \sO_X(m))
$$
\end{small}

\noindent is surjective for any $m \geq 0$, so $X \subseteq \nP^{n+2}$ is projectively normal. By Evans--Griffith theorem \cite{EG} (see \cite[Corollary 1.5]{Ein1}), $X \subseteq \nP^{n+2}$ is a complete intersection.
\end{proof}

\begin{remark}
Let $X \subseteq \nP^{n+3}$ be a non-degenerate smooth projective Fano variety of dimension $n \geq 7$. Suppose that there exists a positive integer $k$ with $(k+2)(k+1)/2 \leq n$ such that  $\operatorname{length}(\langle L \cap x \rangle \cap X) \leq k+1$ (or $\reg(\langle L \cap x \rangle \cap X) \leq k+1$) for all $x \in X$ and a fixed general line $L \subseteq \nP^{n+3}$. By the arguments in the proof of Lemma \ref{lem:projmethod}, $X \subseteq \nP^{n+2}$ is projectively normal. Then \cite[Proposition 1.4]{Ein2} implies that $X \subseteq \nP^{n+3}$ is a complete intersection.
\end{remark}

Now, we further suppose that $X$ is Fano or Calabi--Yau. By Kodaira vanishing and Serre duality, $H^i(X, \sO_X(m))=0$ for $1 \leq i \leq n-1$ and $m \in \mathbf{Z}$. To get $H^i(X, \sO_X)=0$ for $1 \leq i \leq n-1$ when $X$ is Calabi--Yau, we also need to apply Barth--Larsen theorem (cf. \cite[Theorem 3.2.1]{positivity}). Our proof of the following lemma is inspired by \cite[Proof of Theorem 6]{BC} but much simpler.

\begin{lemma}\label{lem:h^0(I_X(n))}
Suppose that $X$ is Fano or Calabi--Yau so that $-K_X=\ell H$ for $0 \leq \ell \leq n-1$. Then $H^0(\nP^{n+2}, \sI_{X|\nP^{n+2}}(n)) \neq 0$.
\end{lemma}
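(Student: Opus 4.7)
My plan is to reduce the claim to a combinatorial inequality, by combining the explicit splitting of a general projection $\pi_*\sO_X$ from Subsection~\ref{subsec:Fano} with a cohomology chase. I would first verify that hypotheses $(a)$--$(c)$ of Subsection~\ref{subsec:Fano} hold for our $X$: $(a)$ follows from Kodaira vanishing in the Fano case, and from Kodaira combined with the Barth--Larsen theorem in the Calabi--Yau case (to handle $H^i(X, \sO_X)$ for $1 \leq i \leq n-1$); $(b)$ is the subcanonicity $K_X = -\ell H$; and $(c)$ is linear normality via Zak's theorem since $n \geq 4 \geq 2e-1 = 3$. Hence for a general projection $\pi \colon X \to \nP^n$ from a general $\nP^1 \subseteq \nP^{n+2}$ disjoint from $X$, Subsection~\ref{subsec:Fano} yields
$$
\pi_* \sO_X \;=\; \bigoplus_{i=0}^{n+1-\ell} \sO_{\nP^n}(-i)^{\oplus a_i}
$$
with $a_0 = 1$, $a_1 = 2$, $a_i = a_{n+1-\ell-i}$, and $\sum_i a_i = d$.

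Next, Kodaira vanishing (together with Serre duality to handle $i=n$, since $n+\ell > 0$) gives $H^i(X, \sO_X(n)) = 0$ for all $i \geq 1$, so
$$
h^0(X, \sO_X(n)) \;=\; \chi(X, nH) \;=\; \sum_{i=0}^{n+1-\ell} a_i \binom{2n-i}{n}.
$$
A cohomology chase on $0 \to \sI_X(n) \to \sO_{\nP^{n+2}}(n) \to \sO_X(n) \to 0$, invoking the vanishing of $H^i(\nP^{n+2}, \sO(n))$ for $1 \leq i \leq n+1$ and the vanishings just noted, shows that $H^i(\nP^{n+2}, \sI_X(n)) = 0$ for every $i \geq 2$. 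Consequently,
$$
h^0(\nP^{n+2}, \sI_X(n)) \;\geq\; \chi(\sI_X(n)) \;=\; \binom{2n+2}{n+2} - \sum_{i=0}^{n+1-\ell} a_i \binom{2n-i}{n},
$$
and it suffices to establish the strict combinatorial inequality $\binom{2n+2}{n+2} > \sum_{i=0}^{n+1-\ell} a_i \binom{2n-i}{n}$.

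To prove this inequality I would exploit the symmetry $a_i = a_{n+1-\ell-i}$ and the boundary values $a_0 = 1, a_1 = 2$: two applications of Pascal's rule give $\binom{2n+2}{n+2} = \binom{2n}{n} + 2\binom{2n}{n-1} + \binom{2n}{n-2}$, so that the extremal terms $a_0 \binom{2n}{n} + a_1 \binom{2n-1}{n} = \binom{2n}{n} + 2\binom{2n-1}{n}$ are absorbed, and one is left to compare $2\binom{2n}{n-1} + \binom{2n}{n-2} - 2\binom{2n-1}{n}$ with $\sum_{i \geq 2} a_i \binom{2n-i}{n}$; pairing $i$ with $n+1-\ell-i$ and using the rapid decay of $\binom{2n-i}{n}$ in $i$ reduces this to a manageable estimate. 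The main obstacle is the Calabi--Yau case $\ell = 0$, where the splitting has the greatest number of summands and the middle multiplicities $a_i$ can be the largest relative to $n$; if the naive $\chi$-bound is insufficient for large middle $a_i$, one would either invoke a classical upper degree bound for smooth codimension-two Calabi--Yau (or Fano of low index) subvarieties to control $d = \sum a_i$, or alternatively argue $h^1(\sI_X(n)) > 0$ directly from the failure of projective normality in such edge cases, thereby still forcing $h^0(\sI_X(n)) > 0$.
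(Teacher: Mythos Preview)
Your reduction to the inequality $\binom{2n+2}{n+2} > \sum_i a_i\binom{2n-i}{n}$ via the splitting of $\pi_*\sO_X$ is sound, and for the high-index cases $\ell\in\{n-1,n-2,n-3\}$ it is essentially what the paper does (the paper compares in degree $2$ or $3$ rather than $n$, but the mechanism is the same). The genuine gap is the low-index range $0\le\ell\le n-4$. With only the constraints $a_0=a_{n+1-\ell}=1$, $a_1=a_{n-\ell}=2$ and the symmetry $a_i=a_{n+1-\ell-i}$, the middle multiplicities---hence $d$---are entirely unconstrained, and the inequality can fail. Already at $n=4$, $\ell=0$ one computes that $\chi(\sI_X(4))>0$ is equivalent to $a_2=a_3\le 3$ (i.e.\ $d\le 12$); nothing in your argument supplies this. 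Neither escape hatch closes the gap: the ``classical upper degree bound'' you invoke is not something one can quote off the shelf, and the alternative ``$h^1(\sI_X(n))>0$ forces $h^0(\sI_X(n))>0$'' is invalid, since from $h^0-h^1=\chi$ one cannot conclude $h^0>0$ when $\chi\le 0$ without also knowing $h^1>|\chi|$.

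The paper handles $\ell\le n-4$ by a different mechanism that is precisely what produces the missing degree control. Via the Serre correspondence one builds a rank-two bundle $F$ on $\nP^{n+2}$ with $c_1(F)=n+3-\ell$, $c_2(F)=d$, sitting in $0\to\sO\to F\to\sI_X(n+3-\ell)\to 0$. After a general linear section reducing to $\ell=0$, a Riemann--Roch computation of $\chi(F(-n-2))$ (for $n$ odd) or $\chi(F(-n-3))$ (for $n$ even) forces the formal Chern roots $a,b$ of $F$ to be \emph{integers} in the range $\{2,\dots,n+1\}$ (resp.\ $\{1,\dots,n+2\}$); this is a Schwarzenberger-type condition. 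With $a$ an integer in that range one checks directly that $\chi(F(-3))>0$, whence $h^0(\sI_X(n-\ell))>0$. Note that this integrality is exactly what bounds $d=ab\le\big((n+3-\ell)/2\big)^2$, and there is no route to it from the splitting of $\pi_*\sO_X$ alone; so your combinatorial strategy, as it stands, cannot be completed without importing the core of the paper's argument.
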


\begin{proof}
Recall from Subsection \ref{subsec:Fano} that $d=a_2+6$.
Suppose $\ell \geq n-2$. Then $\ell = n-1$ or $n-2$, so $a_2 \leq 2$. We find
$$
\begin{array}{rcl}
h^0(X, \sO_X(2)) &=& h^0(\nP^n, \sO_{\nP^n}(2)) + 2 h^0(\nP^n, \sO_{\nP^n}(1)) + a_2 h^0(\nP^n, \sO_{\nP^n})\\
&< &  h^0(\nP^n, \sO_{\nP^n}(2)) + 2 h^0(\nP^n, \sO_{\nP^n}(1)) + 3 h^0(\nP^n, \sO_{\nP^n}) \\
&=&h^0(\nP^{n+2}, \sO_{\nP^{n+2}}(2)).
\end{array}
$$
Thus $H^0(\nP^{n+2}, \sI_{X|\nP^{n+2}}(2)) \neq 0$. Suppose $\ell = n-3$. By Lemma \ref{lem:-K_X=(n-3)He=2}, $a_2=2$ or $3$. We find
$$
\begin{array}{rcl}
h^0(X, \sO_X(3)) &=& h^0(\nP^n, \sO_{\nP^n}(3)) + 2 h^0(\nP^n, \sO_{\nP^n}(2)) + a_2 h^0(\nP^n, \sO_{\nP^n}(1)) + 2h^0(\nP^n, \sO_{\nP^n})\\
&< & h^0(\nP^n, \sO_{\nP^n}(3)) + 2 h^0(\nP^n, \sO_{\nP^n}(2)) + 3 h^0(\nP^n, \sO_{\nP^n}(1)) + 4h^0(\nP^n, \sO_{\nP^n})\\
&=& h^0(\nP^{n+2}, \sO_{\nP^{n+2}}(3)).
\end{array}
$$
This means that $H^0(\nP^{n+2}, \sI_{X|\nP^{n+2}}(3)) \neq 0$. 

\medskip

Suppose that $0 \leq \ell \leq n-4$. As is well-known, there is a rank two vector bundle $F$ on $\nP^{n+2}$ fitting into a short exact sequence
$$
0 \longrightarrow \sO_{\nP^{n+2}} \longrightarrow F \longrightarrow \sI_{X|\nP^{n+2}}(n+3-\ell) \longrightarrow 0.
$$
Note that $c_1(F) = n+3-\ell$ and $c_2(F)=d$. Let $a, b \in \mathbf{C}$ be two roots of $x^2-(n+3-\ell)x+d=0$. Then $a+b = n+3-\ell = c_1(F)$ and $ab=d=c_2(F)$.
We claim that
$$
a=\begin{cases} 2, 3, \ldots, n+1 & \text{ when $n$ is odd} \\
1, 2, \ldots, n+2 & \text{ when $n$ is even}. \end{cases}
$$
Granting the claim, we see that
$$
\chi(F(-3)) =  {a-3+n+2 \choose n+2 } + {b-3 + n+2 \choose n+2 } = {a + n-1 \choose n+2} + {2n+2-a-\ell \choose n+2} > 0.
$$
As $\chi(F(-3))  = h^0(\nP^{n+2}, F(-3)) - h^1(\nP^{n+2}, F(-3))$, we get 
$$
h^0(\nP^{n+2}, \sI_{X|\nP^{n+2}}(n-\ell)) = h^0(\nP^{n+2}, F(-3)) > 0.
$$
To prove the claim, taking a general linear section of dimension $(n-\ell)$, we may assume that $\ell = 0$. We replace $n$ by $n-\ell \geq 4$, so we still have $n \geq 4$.
First, consider the case when $n$ is odd. Note that 
$$
\chi(F(-n-2)) = \chi(\sO_{\nP^{n+2}}(-n-2)) + \chi(\sI_{X|\nP^{n+2}}(1)) =0+0= 0.
$$
We find
$$
\chi(F(-n-2)) = {a \choose n+2} + {n+3-a \choose n+2} = \frac{(2a-(n+3))(a-2) \cdots (a-n-1)}{(n+1)!} = 0,
$$
so $a=2, 3, \ldots, n+1$. Next, consider the case when $n$ is even. Note that
$$
\chi(F(-n-3)) = \chi(\sO_{\nP^{n+2}}(-n-3)) + \chi(\sI_{X|\nP^{n+2}}) = 1 + (-1) = 0.
$$
We find
$$
\chi(F(-n-3)) ={a-1 \choose n+2 } + {n+2-a \choose n+2}= \frac{2(a-1)(a-2) \cdots (a-n-2)}{(n+2)!} = 0,
$$
so $a=1,2, \cdots, n+2$. 
\end{proof}

\begin{proof}[Proof of Theorem \ref{thm:BC}]
Note that $H^i(X, \sO_X(m))=0$ for $1 \leq i \leq n-1$ and $m \in \mathbf{Z}$. Since $X$ is contained in a hypersurface of degree $n$ in $\nP^{n+2}$ by Lemma \ref{lem:h^0(I_X(n))}, we have $\operatorname{length}(\langle P, x \rangle \cap X) \leq n$ for all $x \in X$ and a fixed general point $P \in \nP^{n+2}$. Thus Lemma \ref{lem:projmethod} yields that $X \subseteq \nP^{n+2}$ is a complete intersection. For the last statement, recall from \cite[Theorem 1.2]{KP} that if $d \leq 2n+2$, then $X$ is Fano or Calabi--Yau.
\end{proof}

\section{Proof of Theorem \ref{thm:main}}\label{sec:proof2}
\noindent In this section, we prove Theorem \ref{thm:main} after proving some preliminary results. 

\begin{lemma}[{cf. \cite[Lemma 2.2]{BEL}}]\label{lem:numbgens}
Assume that $n \geq 3$ and $X \subseteq \nP^r$ is arithmetically Cohen--Macaulay and subcanonical with $K_X = k H$ for an integer $k$. If $T$ is  the number of minimal generators of the defining ideal $I_{X|\nP^r}$, then
$$
T \leq (e-1)\left( d-1 -\frac{e}{2} \right) = ed-e - d+1 - \frac{e^2-e}{2}.
$$
\end{lemma}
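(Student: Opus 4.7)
The plan is to reduce to an Artinian Gorenstein algebra and bound $T$ via the Koszul resolution of the residue field, extracting the sharp constant from Gorenstein self-duality.

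\textbf{Artinian Gorenstein reduction.} Since $X \subseteq \nP^r$ is ACM and subcanonical with $K_X = kH$, the graded coordinate ring $S_X$ is Cohen--Macaulay with $\omega_{S_X} \cong S_X(k)$, hence Gorenstein. Cutting by a regular sequence of $n+1$ general linear forms produces an Artinian Gorenstein algebra $A \cong R/J$ with $R = \mathbf{C}[y_1, \dots, y_e]$, whose graded Betti numbers coincide with those of $S_X$; in particular, $T$ equals the number of minimal generators of $J$. Its Hilbert function $h_j = \dim A_j$ is symmetric about the socle degree $s = k+n+1 = \reg(S_X)$, with $h_0 = h_s = 1$, $h_1 = h_{s-1} = e$ (from non-degeneracy and $n \geq 3$), and $\sum_j h_j = d$.

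\textbf{Koszul identity.} Since $J \subseteq (y_1, \dots, y_e)^2$, we have $T = \sum_{j=2}^s \beta_{1,j}(A)$. Computing $\operatorname{Tor}^R_\bullet(A, \mathbf{C})$ via the Koszul complex on $(y_1, \dots, y_e)$ gives the identity
\[
\beta_{1,j}(A) \;=\; e\,h_{j-1} - h_j - \rho_j, \qquad \rho_j \;:=\; \dim \operatorname{image}\bigl(\Lambda^2 R_1 \otimes A_{j-2} \to R_1 \otimes A_{j-1}\bigr).
\]
Summing over $2 \leq j \leq s$ and using $\sum_{j \geq 1} h_j = d-1$ yields
\[
T \;=\; (e-1)d + 1 - \sum_{j \geq 2}\rho_j,
\]
so the claim $T \leq (e-1)(d-1) - \binom{e}{2}$ is equivalent to the Koszul-rank inequality $\sum_{j \geq 2} \rho_j \geq \binom{e+1}{2}$.

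\textbf{Koszul rank lower bound.} The value $\rho_2 = \binom{e}{2}$ is automatic: the degree-$2$ Koszul symbols $e_i \wedge e_j \mapsto e_i \otimes y_j - e_j \otimes y_i$ are linearly independent in $R_1 \otimes A_1 \cong R_1 \otimes R_1$ (since $h_1 = e$ forces $A_1 = R_1$). The remaining $e$ units come from the top of the Hilbert function via Gorenstein self-duality: the identity $\beta_{i,j}(A) = \beta_{e-i,\,s+e-j}(A)$ dualizes the degree-$2$ Koszul symbols to a matching contribution in the upper Koszul strand, using $h_{s-1} = e$. Combining the two contributions gives $\sum_j \rho_j \geq \binom{e}{2} + e = \binom{e+1}{2}$, with equality precisely at the extremal Hilbert function $(1,e,1)$ of socle degree $2$ (the case $d = e+2$, where the bound of the lemma is tight).

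The main obstacle is the last step: the naive bound $\beta_{1,j} \leq e\,h_{j-1} - h_j$ gives only $T \leq (e-1)d + 1$, which loses precisely the $\binom{e+1}{2}$ correction. Only the Gorenstein self-duality of the minimal free resolution recovers the sharp constant, paralleling the argument of \cite[Lemma~2.2]{BEL}.
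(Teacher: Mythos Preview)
Your Artinian--Koszul approach is valid and genuinely different from the paper's. The paper works on $X$ itself via kernel bundles: it splits $M_H$ using the projection bundle $M_V=\pi^*\Omega^1_{\nP^n}(1)$, computes $h^i(\wedge^2 M_V\otimes\sO_X(m))$ by Bott vanishing on $\pi_*\sO_X=\bigoplus\sO_{\nP^n}(-i)^{a_i}$, and reads off $\kappa_{1,1}=ea_1-a_2-\binom{e}{2}$ exactly together with $\kappa_{1,m+1}\le ea_{m+1}-a_{m+2}$ for $m\ge1$. Your $h_j$ are the paper's $a_j$ and your inequality $\beta_{1,j}\le eh_{j-1}-h_j$ is the same bound, reached by pure commutative algebra rather than bundle cohomology. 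Your route is more elementary; the paper's fits its running projection theme.

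There is one genuine gap. Your sentence ``dualizes the degree-$2$ Koszul symbols to a matching contribution in the upper Koszul strand, using $h_{s-1}=e$'' does not actually produce the extra $e$ in $\sum_j\rho_j$. The duality $\beta_{i,j}=\beta_{e-i,s+e-j}$ relates Betti numbers, not the Koszul-image dimensions $\rho_j$, and $h_{s-1}=e$ plays no role. What you need is precisely $\rho_{s+1}=e$, equivalently $\beta_{1,s+1}=0$. This \emph{does} follow from Gorenstein duality, but via $\beta_{1,s+1}=\beta_{e-1,e-1}$: since $J\subseteq\mathfrak m^2$ the minimal resolution has $\beta_{i,j}=0$ for $i\ge1$ and $j\le i$, so $\beta_{e-1,e-1}=0$. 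With this in hand your sum over $j\ge2$ gives $\sum\rho_j\ge\rho_2+\rho_{s+1}=\binom{e}{2}+e=\binom{e+1}{2}$ (the two terms are distinct because $e\ge2$ forces $s\ge2$), and the proof closes. The paper makes exactly the same move, phrased as ``by Serre duality, $\kappa_{1,k+n+1}=0$''; it simply drops that term from the sum rather than recording it as $\rho_{s+1}=e$. Once you replace the hand-wave by this one line, your argument is complete and equivalent in strength to the paper's.
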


\begin{proof}
Let $M_H$ be the kernel of the evaluation map $H^0(X, H) \otimes \sO_X \to \sO_X(1)$. Recall that the Koszul cohomology $K_{p,q}(X, H)$ is the complex vector space of $p$-th syzygies of weight $q$ of the section ring $R(X,H)=\bigoplus_{m \in \mathbf{Z}} H^0(X, mH)$ and  $K_{p,q}(X, H)= H^1(X, \wedge^{p+1} M_H \otimes \sO_X(q-1))$ for $p \geq 0$ and $q \geq 1$ (see e.g., \cite[Chapter 2]{AN}). As $X \subseteq \nP^r$ is projectively normal and $\sO_X$ is $(k+n+1)$-regular with respect to $\sO_X(1)$ (see e.g., \cite[Subsection 1.8]{positivity}), we have
$$
T=\kappa_{1,1} + \cdots + \kappa_{1,k+n+1},
$$
where $\kappa_{1,m+1}:=\dim K_{1,m+1}(X, H)$ is the graded Betti number for $0 \leq m \leq k+n$. By Serre duality, $\kappa_{1,k+n+1}=0$, so we only need to consider the cases $0 \leq m \leq k+n-1$. We have
$$
H^2(X, \wedge^2 M_H \otimes \sO_X(m-1)) = H^1(X, M_H \otimes \sO_X(m)) = K_{0,m+1}(X, H)= 0~~\text{ for $m \geq 0$}.
$$
Now, suppose that a general projection $\pi \colon X \to \nP^n$ is given by a subspace $V \subseteq H^0(X, \sO_X(1))$  of dimension $n+1$. If $M_V$ is the kernel of the evaluation map $V \otimes \sO_X \to \sO_X(1)$, then $M_V = \pi^* M_{\sO_{\nP^n}(1)} = \pi^* \Omega_{\nP^n}^1(1)$. Then $\pi_* M_V = \Omega_{\nP^n}^1(1) \otimes \pi_* \sO_X$ and $\pi_* \wedge^2 M_V =\Omega_{\nP^n}^2(2) \otimes \pi_* \sO_X$, where $
\pi_{*}\sO_X = \sO_{\nP^n}^{\oplus a_0} \oplus \sO_{\nP^n}(-1)^{\oplus a_1} \oplus \cdots \oplus \sO_{\nP^n}(-(n+1+k))^{\oplus a_{n+1+k}}$ as in Subsection \ref{subsec:Fano}. By Bott vanishing,
$$
\begin{array}{l}
h^1(X, M_V \otimes \sO_X(m)) = a_{m+1}~~\text{ and }~~h^2(X, M_V \otimes \sO_X(m))=0;\\[3pt]
h^1(X, \wedge^2 M_V \otimes \sO_X(m)) = 0~~\text{ and }~~h^2(X, \wedge^2 M_V \otimes \sO_X(m))=a_{m+2}.
\end{array}
$$
Put $\overline{V}:=H^0(X, \sO_X(1))/V$. Note that $\dim \overline{V} = e$. We have a short exact sequence
$$
0 \longrightarrow M_V \longrightarrow M_H \longrightarrow \overline{V} \otimes \sO_X \longrightarrow 0.
$$
There exists a vector bundle $K$ on $X$ fitting in short exact sequences
$$
0 \longrightarrow \wedge^2 M_V \longrightarrow \wedge^2 M_H \longrightarrow K \longrightarrow 0~~\text{ and }~~0 \longrightarrow M_V \otimes \overline{V} \longrightarrow K \longrightarrow \wedge^2 \overline{V} \otimes \sO_X \longrightarrow 0.
$$
We have
$$
\kappa_{1,m+1} = h^1(X, \wedge^2 M_H \otimes \sO_X(m)) = h^1(X, K \otimes \sO_X(m)) - h^2(X, \wedge^2 M_V \otimes \sO_X(m)),
$$
and
$$
\begin{array}{rcl}
h^1(X, K \otimes \sO_X(m)) &=& \dim \overline{V} \cdot h^1(X, M_V \otimes \sO_X(m)) \\
&&- \operatorname{corank}\big(H^0(X, K \otimes \sO_X(m)) \longrightarrow \wedge^2 \overline{V} \otimes H^0(X, \sO_X(m))\big).
\end{array}
$$
When $m = 0$, since $H^0(X, K)=0$, we get $\kappa_{1,1} = ea_1 - a_2 - {e \choose 2}$.
When $1 \leq m \leq k+n-1$, we get $\kappa_{1,m+1} \leq ea_{m+1} - a_{m+2}$.
Then we find
$$
T \leq e(a_1 + \cdots + a_{k+n}) - (a_2 + \cdots + a_{k+n+1}) - {e \choose 2} =   e(d-2) - (d-e-1) - {e \choose 2},
$$
which completes the proof.
\end{proof}

The proof of the lemma also yields the following.

\begin{corollary}
Suppose that $X$ is cut out scheme-theoretically in $\nP^r$ by hypersurfaces of degree at most $d_0 \geq 3$. If
$$
f(e):= (e-1) \sum_{i=2}^{d_0-1} {e+i-1 \choose i} + \frac{e^2+e}{2} \leq n+1,
$$
then $X \subseteq \nP^r$ is a complete intersection.
\end{corollary}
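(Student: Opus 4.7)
The plan is to mirror the proof of Lemma \ref{lem:numbgens} to bound the minimum number $T$ of minimal generators of the homogeneous ideal $I_{X|\nP^r}$ by $n$, then invoke Netsvetaev's theorem to conclude that $X \subseteq \nP^r$ is a complete intersection.

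First, observe that the hypothesis $f(e) \leq n+1$ forces $n$ to be large compared to $e$ (already for $d_0 = 3$ it requires $e^2(e+1)/2 \leq n+1$), so in particular $n \geq 2e+1$ and Barth--Larsen yields $H^i(X, \sO_X(m)) = 0$ for $1 \leq i \leq n-1$ and all $m \in \mathbf{Z}$. In particular $X \subseteq \nP^r$ is projectively normal. Since $X$ is cut out scheme-theoretically by forms of degree at most $d_0$, the minimal generators of $I_{X|\nP^r}$ lie in degrees $2, 3, \ldots, d_0$, so in the notation of Lemma \ref{lem:numbgens} we have
$$T = \sum_{q = 1}^{d_0 - 1} \kappa_{1, q}(X, H).$$

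Now take a general projection $\pi : X \to \nP^n$ and use the same three short exact sequences
$$0 \to M_V \to M_H \to \bar V \otimes \sO_X \to 0,$$
$$0 \to \wedge^2 M_V \to \wedge^2 M_H \to K \to 0, \qquad 0 \to M_V \otimes \bar V \to K \to \wedge^2 \bar V \otimes \sO_X \to 0,$$
together with Bott vanishing, exactly as in the proof of Lemma \ref{lem:numbgens}. The expected outcome is the universal pair of estimates
$$\kappa_{1,1}(X,H) \leq \binom{e+1}{2}, \qquad \kappa_{1,q}(X,H) \leq (e-1) \binom{e+q-1}{q} \quad (2 \leq q \leq d_0 - 1),$$
where the first is the estimate $\kappa_{1,1} \leq e a_1 - \binom{e}{2} = e^2 - \binom{e}{2}$ obtained by dropping the nonnegative $-a_2$ term in Lemma \ref{lem:numbgens}, and the second replaces the $a$-dependent bound $e a_q - a_{q+1}$ by a universal upper bound independent of the decomposition of $\pi_*\sO_X$. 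Summing these yields $T \leq f(e) \leq n+1$, and Netsvetaev's theorem then gives the conclusion.

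The main obstacle is establishing the second universal bound $\kappa_{1,q}(X,H) \leq (e-1)\binom{e+q-1}{q}$: in Lemma \ref{lem:numbgens} one has the clean identification $h^1(X, M_V \otimes \sO_X(m)) = a_{m+1}$ obtained from the Horrocks splitting of $\pi_*\sO_X$ under the ACM/subcanonical hypothesis, whereas here one instead needs an intrinsic estimate for $h^1(\nP^n, \Omega^1_{\nP^n}(m+1) \otimes \pi_*\sO_X)$ of size $\binom{e+m}{m}$ that does not presuppose a splitting. Such an estimate should follow from Bott vanishing on $\nP^n$ combined with the regularity of $\pi_*\sO_X$ that one can extract from the scheme-theoretic cut-out hypothesis in degree $\leq d_0$, but this is the delicate step that transforms Lemma \ref{lem:numbgens} from its $a_i$-dependent form into the $a_i$-free form needed here.
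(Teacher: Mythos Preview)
There is a genuine gap at the outset. Barth--Larsen does \emph{not} give $H^i(X,\sO_X(m))=0$ for $1\le i\le n-1$ and all $m\in\Z$; it is a statement about integral cohomology which, via Hodge theory, yields only $H^i(X,\sO_X)=0$ in a limited range, together with $\Pic(X)=\Z\cdot H$. Neither of these implies the full intermediate-cohomology vanishing you assert, and even that vanishing would not by itself give projective normality, which is the separate condition $H^1(\nP^r,\sI_{X|\nP^r}(m))=0$. Without arithmetic Cohen--Macaulayness you have no Horrocks splitting of $\pi_*\sO_X$, so the Koszul computation of Lemma~\ref{lem:numbgens} never gets started; the ``main obstacle'' you flag is really a symptom of this missing hypothesis rather than a separate difficulty.

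The paper supplies the missing step with an ingredient you did not invoke: since $f(e)\ge e(d_0-1)+1$, the hypothesis $f(e)\le n+1$ forces $e(d_0-1)\le n$, and then \cite[Corollary~2]{BEL} (applied to a variety cut out scheme-theoretically in degree $\le d_0$) shows directly that $X\subseteq\nP^r$ is arithmetically Cohen--Macaulay. Combined with $K_X=kH$ from Barth--Larsen on the Picard group, this places $X$ in the setting of Subsection~\ref{subsec:Fano}, so $\pi_*\sO_X$ \emph{does} split and the integers $a_i$ are available. One then reuses the bounds $\kappa_{1,1}\le ea_1-a_2-\binom{e}{2}$ and $\kappa_{1,m+1}\le ea_{m+1}-a_{m+2}$ from the proof of Lemma~\ref{lem:numbgens}, telescopes, and applies the universal estimate $a_i\le\binom{e+i-1}{i}$ to obtain $T\le f(e)\le n+1$; Netsvetaev then finishes as you indicated. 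In short, the route is not to bypass the splitting but to earn it via \cite{BEL}.
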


\begin{proof}
Notice that $f(e)$ is a polynomial of degree $d_0$ in $e$ and $f(e) \geq e(d_0-1)+1$. As $f(e) \leq n+1$, \cite[Corollary 2]{BEL} shows that $X \subseteq \nP^r$ is arithmetically Cohen--Macaulay. Note that $f(e) \geq e^2(e+1)/2 \geq 3e$. Thus $n \geq 3e-1 \geq e+3$. By Barth--Larsen theorem (cf. \cite[Corollary 3.2.3]{positivity}), $K_X = kH$ for some integer $k$. We may assume that $3 \leq d_0 \leq k+n+1$. Notice that
$$
a_i \leq {e+i-1 \choose i}~~\text{ for all $0 \leq i \leq n+1+k$}.
$$
If $T$ is the minimal number of hypersurfaces in $\nP^r$ cutting out $X$ scheme-theoretically, then
$$
\begin{array}{l}
\displaystyle T \leq \kappa_{1,1} + \cdots + \kappa_{1,d_0-1} \leq e(a_1 + \cdots + a_{d_0-1}) - (a_2 + \cdots + a_{d_0}) - {e \choose 2}\\
\displaystyle  \leq (e-1)(a_2 + \cdots + a_{d_0-1}) + \frac{e^2+e}{2} \leq (e-1) \sum_{i=2}^{d_0-1} {e+i-1 \choose i} + \frac{e^2+e}{2}=f(e) \leq n+1.
\end{array}
$$
Now, \cite[Theorem 3.2]{Netsvetaev} implies that $X \subseteq \nP^r$ is a complete intersection.
\end{proof}

\begin{remark}
When $d_0=2$ in the situation of the corollary, Ionescu--Russo \cite[Theorem 3.8]{IR2} proved that if $n \geq 2e+1$, then $X \subseteq \nP^r$ is a complete intersection.
\end{remark}

\begin{theorem}[{cf. \cite[Corollary 3]{BEL}}]\label{thm:(n+2)/e+2}
If
$$
d \leq \frac{n+2}{e}+e, 
$$
then $X \subseteq \nP^r$ is a complete intersection unless it is projectively equivalent to a variety of minimal degree with $n-1 \leq e \leq n+2$ or the Pl\"{u}cker embedding $\mathbf{G}(1,4) \subseteq \nP^9$ or its linear section with $4 \leq n \leq 6$. 
\end{theorem}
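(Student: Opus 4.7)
The plan is to mimic \cite[Corollary 3]{BEL}, the key change being to use Noma's theorem \cite[Theorem 9]{Noma} in place of \cite[Theorem 1]{BEL} to establish arithmetic Cohen--Macaulayness under a weaker degree hypothesis. After sorting out exceptional cases via Proposition \ref{prop:mindeg}, I would show $X \subseteq \nP^r$ is arithmetically Cohen--Macaulay, bound the minimal number of generators of the defining ideal via Lemma \ref{lem:numbgens}, and then apply Netsvetaev's theorem \cite{Netsvetaev}.

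First I would dispose of the small-degree exceptions using Proposition \ref{prop:mindeg}. If $d = e+1$, then $X$ is a variety of minimal degree; combining $e+1 \leq (n+2)/e + e$ (which forces $e \leq n+2$) with the classification of smooth minimal-degree varieties---rational normal scrolls (forcing $e \geq n-1$) and $v_2(\nP^2)$---pins down the range $n-1 \leq e \leq n+2$. If $d = e+2$, Proposition \ref{prop:mindeg}(1) identifies $X$ as a complete intersection of type $(2,2)$ or the Pl\"{u}cker embedding $\mathbf{G}(1,4) \subseteq \nP^9$ or its hyperplane section. If $d \geq e+3$, iterated applications of parts (2) and (3) of Proposition \ref{prop:mindeg} force $d \geq 3e+2$ outside further exceptions that are either themselves complete intersections or the Pl\"{u}cker-related cases already listed. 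Combining $d \geq 3e+2$ with the degree bound $d \leq (n+2)/e + e$ yields $n \geq 2e^2 + 2e - 2$; in particular $n \geq 2e+1$ and $n \geq 3e-2$.

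In this remaining non-exceptional range, Barth--Larsen (\cite[Corollary 3.2.3]{positivity}) gives $\operatorname{Pic}(X) = \mathbf{Z} \cdot H$, so $X \subseteq \nP^r$ is subcanonical with $K_X = kH$, and \cite[Theorem 1.2]{KP} shows $X$ is Fano. Invoking Noma's theorem \cite[Theorem 9]{Noma} then yields that $X \subseteq \nP^r$ is arithmetically Cohen--Macaulay, and Lemma \ref{lem:numbgens} bounds the minimal number of generators of $I_{X|\nP^r}$ by
$$
T \leq (e-1)\left(d - 1 - \frac{e}{2}\right).
$$
Substituting $d \leq (n+2)/e + e$ and using $n \geq 3e-2$, one obtains $T \leq n$; then Netsvetaev's theorem \cite{Netsvetaev} concludes that $X \subseteq \nP^r$ is a complete intersection.

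The main obstacle is the numerical bookkeeping: on the one hand, matching the exceptions from Proposition \ref{prop:mindeg} to the precise list in the statement (especially the codimension range $n-1 \leq e \leq n+2$ for minimal-degree varieties and the dimension range $4 \leq n \leq 6$ for the Pl\"{u}cker-related cases); on the other, verifying the inequality $T \leq n$ throughout the remaining range, which for large $e$ may require the finer Fano-index restrictions developed in Subsection \ref{subsec:Fano}. A secondary point is confirming that Noma's theorem applies under our precise degree hypothesis.
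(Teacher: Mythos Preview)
Your overall strategy---sort out the small-degree exceptions, show arithmetic Cohen--Macaulayness via Noma, bound the number of generators via Lemma~\ref{lem:numbgens}, then apply Netsvetaev---is exactly the paper's. But the crucial numerical step ``$T \leq n$'' does not follow from what you have written, and the detour through Proposition~\ref{prop:mindeg} is both unnecessary and insufficient.

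Concretely: writing $T \leq (e-1)(d-1-e/2) = ed - e - d + 1 - (e^2-e)/2$, the hypothesis $d \leq (n+2)/e + e$ controls only the $ed$ term (giving $ed \leq n+2+e^2$); you also need a \emph{lower} bound on $d$ to control the $-d$ term. Your bound $d \geq 3e+2$ from Proposition~\ref{prop:mindeg} is only linear in $e$: substituting it yields $T \leq n + 1 + e^2/2 - 7e/2$, which exceeds $n$ once $e \geq 7$, so even your derived inequality $n \geq 2e^2+2e-2$ does not rescue the argument. The paper instead invokes Lemma~\ref{lem:d<=e(e-1)/2+2} (in Subsection~2.2, not~2.1): once one knows $n \geq 2e+1$, one has $d \geq e^2/2 + 2$, and this \emph{quadratic} lower bound gives
\[
T \;\leq\; ed - e - \tfrac{e^2}{2} - 1 - \tfrac{e^2-e}{2} \;=\; ed - e^2 - 1 - \tfrac{e}{2} \;\leq\; ed - e^2 - 2 \;\leq\; n,
\]
valid for all $e \geq 2$. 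Note also that the paper obtains $n \geq 3e-2$ (hence $n \geq 2e+1$) in one line from $e+3 \leq (n+2)/e + e$, with no need for Proposition~\ref{prop:mindeg} at all.

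Two smaller bookkeeping points: the paper disposes of $e=2$ via Theorem~\ref{thm:BC} rather than the general machinery; and for $d=e+2$ it cites Fujita's classification directly, since Proposition~\ref{prop:mindeg} assumes $e \leq n-2$ and therefore misses the $4$-dimensional linear section of $\mathbf{G}(1,4) \subseteq \nP^9$ (the case $(n,e)=(4,3)$ in your exception list).
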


\begin{proof}
We only need to consider the case that $e \geq 2$.
First, suppose that $e=2$. If $n \leq 3$, then $d \leq 4$ and $X \subseteq \nP^r$ is a complete intersection of type $(2,2)$ or a variety of minimal degree. If $n \geq 4$, then the assertion follows from Theorem \ref{thm:BC}. Next, suppose that $e \geq 3$. If $d=e+1$, then $e \leq n+2$ and clearly $e \geq n-1$. If $d =e+2$, then $2e \leq n+2$ so that $n \geq 4$. By Fujita's classification \cite{Fujita1, Fujita2}, if $X \subseteq \nP^r$ is not a complete intersection, then it is the Pl\"{u}cker embedding $\mathbf{G}(1,4) \subseteq \nP^9$ or its linear section. 
Now, we assume that $d \geq e+3$. As $e+3 \leq (n+2)/e + e$, we get $n \geq 3e-2 \geq 2e+1$. Since $d \leq ne+1$, Barth--Larsen theorem (cf. \cite[Corollary 3.2.3]{positivity}) and \cite[Theorem 1.2]{KP} show that $X$ is a Fano variety with $-K_X = \ell H$ for a positive integer $\ell$. By \cite[Theorem 9]{Noma}, $X \subseteq \nP^r$ is $k$-normal for $k \geq e(d-e+1)-(n+e)$, but the given condition says $e(d-e+1)-(n+e) \leq 2$. Together with Zak's theorem on linear normality (cf. \cite[Theorem 3.4.25]{positivity}), we see that $X \subseteq \nP^r$ is projectively normal. Thus $X \subseteq \nP^r$ is arithmetically Cohen--Macaulay. Now, applying Lemma \ref{lem:d<=e(e-1)/2+2} and the given condition, we find
$$
ed-e -d+1 - \frac{e^2-e}{2} \leq ed-e - \frac{e^2}{2}-1 - \frac{e^2-e}{2} = ed-e^2-1- \frac{e}{2} \leq ed -e^2-2 \leq n.
$$
Then Lemma \ref{lem:numbgens} says that $I_{X|\nP^r}$ can be generated by $n$ generators. As $n \geq 3e-2$, \cite[Theorem 3.2]{Netsvetaev} implies that $X \subseteq \nP^r$ is a complete intersection.
\end{proof}

We are ready to prove Theorem \ref{thm:main}.

\begin{proof}[Proof of Theorem \ref{thm:main}]
By Lemma \ref{lem:d<=e(e-1)/2+2} and the given condition,
$$
\frac{e^2}{2} \leq d-2 \leq   \sqrt[3]{\frac{(n+e^2-2e+2)^2}{2}},
$$
so 
$$
\frac{e^3}{2} \leq n+e^2-2e+2.
$$
This implies that
$$
d \leq  \sqrt[3]{\frac{(n+e^2-2e+2)^2}{2}}+2 \leq \frac{n+e^2-2e+2}{e}+2 = \frac{n+2}{e}+e.
$$
Now, the theorem follows from Theorem \ref{thm:(n+2)/e+2} since in our case $X \subseteq \nP^r$ is neither a variety of minimal degree nor the Pl\"{u}cker embedding $\mathbf{G}(1,4) \subseteq \nP^9$ or its linear section.
\end{proof}


\end{document}